\newcommand{\arc}[2]{\path{} (#1) edge [->,thick] node {} (#2);}
\newcommand{\arcSym}[2]{\path{} (#1) edge [<->,thick] node {} (#2);}
\newcommand{\arcDash}[2]{\path{} (#1) edge [->,dashed,thick] node {} (#2);}
\newcommand{\arcDot}[2]{\path{} (#1) edge [->,dotted,thick] node {} (#2);}
\newcommand{\looparcL}[1]{\path{} (#1) edge [loop left,thick] node {} (#1);}
\newcommand{\looparcR}[1]{\path{} (#1) edge [loop right,thick] node {} (#1);}
\def\<{\langle}
\def\>{\rangle}
\newcommand{\Stacks}[1]{\operatorname{Stacks}(#1)}
\newcommand{\stacks}{\operatorname{Stacks}(\mathcal{O})}
\newcommand{\Sn}[1]{\mathcal{S}_{#1}}
\newcommand{\Cn}[1]{\mathcal{C}_{#1}}
\newcommand{\Sym}[1]{\operatorname{Sym}(#1)}
\newcommand{\refine}[2]{{#2}\mathop{\Vert}{#1}(#2)}
\newcommand{\Auto}[1]{\operatorname{Stab}(#1)}
\newcommand{\Iso}[2]{\operatorname{Transp}(#1,\,#2)}
\newcommand{\SOm}{\Sym{\Omega}}
\newcommand{\ov}[1]{\overline{#1}}
\newcommand{\N}{\mathbb{N}}
\newcommand{\set}[2]{\{#1:#2\}}
\newcommand{\bigset}[2]{\left\{#1:#2\right\}}
\renewcommand{\to}{\rightarrow}
\newtheorem{thm}{Theorem}[section]
\newtheorem{lemma}[thm]{Lemma}
\newtheorem{cor}[thm]{Corollary}
\newtheorem{prop}[thm]{Proposition}
\newtheorem{remark}[thm]{Remark}
\theoremstyle{definition}
\newtheorem{example}[thm]{Example}
\newtheorem{defn}[thm]{Definition}
\newtheorem{notation}[thm]{Notation}
\title{
  Perfect refiners for permutation group backtracking algorithms
}
\author{
  Christopher Jefferson,
  Rebecca Waldecker,
  Wilf A.\,Wilson
}
\begin{document}

\maketitle

\begin{abstract}
  Backtrack search is a fundamental technique for computing with finite permutation groups, which has been formulated in terms of points, ordered partitions, and graphs. We provide a framework for discussing the most common forms of backtrack search in a generic way.
  We introduce the concept of perfect refiners to better understand and compare the pruning power available in these different settings.
  We also present a new formulation of backtrack search, which allows the use of graphs with additional vertices, and which is implemented in the software package \textsc{Vole}.
  For each setting,
  we classify the groups and cosets for which there exist perfect refiners.  
  Moreover, we describe perfect refiners for many naturally-occurring examples of stabilisers and transporter sets, including applications to normaliser
  and subgroup conjugacy problems for 2-closed groups.
\end{abstract}


\section{Introduction}\label{sec-intro}

A careful backtrack search through the elements of a symmetric group is the fastest general purpose technique, in practice, for solving many computational problems in finite permutation groups.
This includes finding normalisers and intersections of subgroups, and
stabilisers of sets and graphs.

At the heart of these backtracking algorithms are refiners.
Refiners are functions which are used to prune redundant parts of the search.
They provide the main facility for taking into account the structural aspects of the problem at hand, and making clever deductions. Better refiners are more likely to prune more efficiently, and can reduce search times by orders of magnitude.
Refiners, therefore, are an obvious and ongoing target of further research (see
for example~\cite{newrefiners} for recent work in this area), and they are the focus of this article.

Roughly speaking, backtracking algorithms organise permutations as bijective maps 
on sets of combinatorial structures. The first version was originally formulated by Sims,
organised around a base and strong generating set (see~\cite{Sims1971}).
Two decades later, Leon reformulated this in terms of ordered partitions~\cite{Leon1991,Leon1997}, and thereby obtained significantly improved performance in many cases.
This technique was enhanced with orbital graphs in~\cite{Theissen97,newrefiners}.
More recently, this use of orbital graphs inspired a reformulation in~\cite{GB_published}
around labelled digraphs.

The motivation for organising a search around more sophisticated structures is to allow some sets of permutations to be represented with greater fidelity,
so that refiners can prune more effectively.
However, when choosing the appropriate search infrastructure for a particular problem, we need to keep in mind that more complicated structures do not always offer better pruning, that they are more expensive to compute with, and that it is easy to find relatively small cases where \textit{all} existing backtracking algorithms exhibit poor performance. This holds especially for subgroup conjugacy and normaliser problems.
It is therefore important to develop tools for understanding which problems are well suited to which kinds of backtrack search and refiners, and to understand the limitations of the existing techniques, with the aim of developing improvements.

This article has three main purposes.
Firstly, we introduce the concept of {perfect} refiners, which are those with maximal pruning power within a given search framework. This concept is also meant to initiate a discussion of the quality of refiners, both within and across various backtrack search techniques.
Secondly, we describe an extension to the graph backtracking technique that permits the use of graphs with additional vertices, and which enables many more perfect refiners than previous techniques. There already is an implementation available, in \textsc{Vole}~\cite{vole}.
Finally, we partially classify, and give examples of, perfect refiners in each setting of backtrack search.

Our results show that graph backtracking and extended graph backtracking admit perfect refiners for many natural problems. This goes some way to explaining the experimental data in~\cite[Section~9]{GB_published}, which demonstrated that many problems could be solved without actual backtracking happening during the search.
Furthermore, we find that extended backtracking enables improved refiners for normaliser and conjugacy problems.
In particular, we give refiners for normalisers and conjugacy
within the extended graph backtracking framework that are perfect in some cases.

This article is organised as follows.
In Section~\ref{sec-prelim}, we present some necessary background definitions and notation;
in particular, we briefly describe backtrack search in finite symmetric groups.
We then give definitions and results about arbitrary refiners in Section~\ref{sec-refiners} and perfect refiners in Section~\ref{sec-theory-perfect}.
In Section~\ref{sec-perfect-stab-transp}, we examine perfect refiners for stabiliser and transporter problems.
In Section~\ref{sec-extended}, we introduce extended graph backtracking.
In Section~\ref{sec-hierarchy},
we compare the various kinds of backtrack search and their potentials for perfect refinement.
In Section~\ref{sec-examples-perfect},
we give examples of perfect refiners for many natural problems.
We conclude with some final remarks in Section~\ref{sec-outro}.

\subsection*{Acknowledgments}\label{sec-acknowledgments}

The results discussed here build on work in~\cite{GB_published}, and we include elements of an earlier draft of that article~\cite[Section~5.1]{GB_extended}.
We therefore thank the VolkswagenStiftung (\textbf{Grant no.~93764})
and the Royal Society (\textbf{Grant code URF\textbackslash R\textbackslash 180015})
again for their financial support of this earlier work.
For financial support during the more recent advances,
we thank the DFG (\textbf{Grant no.~WA 3089/9-1}) and again the Royal Society
(\textbf{Grant codes} \textbf{RGF\textbackslash EA\textbackslash 181005} and \textbf{URF\textbackslash R\textbackslash 180015}).

\section{Background and notation}\label{sec-prelim}

In this section, we give some notation and definitions, and we introduce background concepts including 
backtrack search in the symmetric group.
Our terminology and notation closely follows that of~\cite{GB_published}.

Sets and lists feature prominently in this article: a set is an unordered duplicate-free collection of objects, whereas a list has an ordering, and may contain duplicates.
If \(L\) and \(K\) are lists, then \(|L|\) denotes the number of elements in \(L\), \(L \Vert K\) denotes the concatenation of \(L\) and \(K\),
and if \(i \in \{1,\ldots,|L|\}\), then \(L[i]\) denotes the element of \(L\) in the \(i\)-th position.

Throughout this article, \(\Omega\) is a nonempty finite
set, and \(\SOm\) is the symmetric group on \(\Omega\).
We denote any action of \(\SOm\) by exponentiation.
Given an action of \(\SOm\) on a set \(\mathcal{O}\), we iteratively define an action of \(\SOm\) induced on the set of all subsets of \(\mathcal{O}\), and on the set of all finite lists with elements in \(\mathcal{O}\).
In more detail, for all \(x_1, \ldots, x_k \in \mathcal{O}\) and \(g \in \SOm\),
we define \(\{x_1,\ldots,x_k\}^{g} \coloneqq \{x_1^{g},\ldots,x_k^{g}\}\)
and \([x_1,\ldots,x_k]^{g} \coloneqq [x_1^{g},\ldots,x_k^{g}]\).
For example, if \(\Omega \coloneqq \{1,2,3,4\}\), then the image of the set-of-lists
\(\{[1,2],[2,3],[3,2]\}\) under the permutation \((1\,2)(3\,4)\) is
\[\{[1,2],[2,3],[3,2]\}^{(1\,2)(3\,4)}
= \{[1,2]^{(1\,2)(3\,4)},[2,3]^{(1\,2)(3\,4)},[3,2]^{(1\,2)(3\,4)}\}
= \{[2,1],[1,4],[4,1]\}.\]

Let \(\mathcal{O}\) be a set on which \(\SOm\) acts.
We define \(\stacks\) to be the set of all finite lists with entries in \(\mathcal{O}\) 
(we call these lists \emph{stacks}), and \(\stacks^{\stacks}\) to be the set of all functions from \(\stacks\) to itself.
If \(x,y \in \mathcal{O}\),
then the \emph{transporter set} from \(x\) to \(y\) in \(\SOm\) is denoted by \(\Iso{x}{y} \coloneqq \set{g \in \SOm}{x^{g} = y}\), and the \emph{stabiliser} of \(x\) in \(\SOm\) is the subgroup \(\Auto{x} \coloneqq \Iso{x}{x}\) of all permutations in \(\SOm\) that fix \(x\) under the action.
Note that
for all \(h \in \SOm\), \(\Iso{x}{y^{h}} = \Iso{x}{y} \cdot h\).

The \emph{setwise stabiliser} of a collection \(x_1, \ldots, x_n\) of objects is the stabiliser of the set \(\{x_1, \ldots, x_n\}\),
whereas the \emph{pointwise stabiliser} is \(\bigcap_{i = 1}^{n} \Auto{x_i}\), which is equal to the stabiliser of any list of the objects, such as \([x_1, \ldots, x_n]\).

The backtrack search techniques considered in this article are organised around lists of points, ordered partitions, and graphs.
An \emph{ordered partition} of a set \(V\) is a list of nonempty disjoint subsets of \(V\)
whose union is \(V\).
A \emph{graph} on a set \(V\) is a pair \((V, E)\) of \emph{vertices} \(V\) and a set of \(2\)-subsets of \(V\) called \emph{edges}.
Similarly, a \emph{digraph} on \(V\) is a pair \((V, A)\) of \emph{vertices} \(V\) and a set of ordered pairs in \(V\) called \emph{arcs}.
The action of \(\Sym{V}\) on the sets of all graphs and digraphs on \(V\) is defined, respectively, by \((V,E)^{g} \coloneqq (V,E^{g})\) and \((V,A)^{g} \coloneqq (V,A^{g})\) for all \(g \in \Sym{V}\), edge sets \(E\), and arc sets \(A\).
A \emph{labelled digraph} is a digraph with an assignment of a label to each
vertex and arc. See~\cite[Section~2.1]{GB_published} for more information.
Whenever, in this article, we write that `\((V, E)\) is a graph', then this means that \(V\) is the set of vertices and \(E\)
is the set of edges as explained above. In the same way, we use the remaining notation introduced here without further explanation.

\subsection{Classical backtracking, partition backtracking, and graph backtracking}\label{sec-backtrack-types}

We briefly summarise the concept of backtrack search in \(\SOm\), and introduce some terminology.

Let \(U_1, \ldots, U_k\) be subsets of \(\SOm\) for some \(k \in \N\), and suppose that we wish to search for the intersection \(U_1 \cap \cdots \cap U_k\).
For this technique to be useful in practice, it should be computationally cheap, for each \(i \in \{1,\ldots,k\}\), 
to determine whether any given element of \(\SOm\) is contained in \(U_{i}\).

Many typical search problems can be formulated in this way.
For example, if \(U_1\) is a subgroup of \(\SOm\) given by generators and \(U_2 \coloneqq \Iso{\Gamma}{\Delta}\) for some graphs \(\Gamma\) and \(\Delta\) with vertex set \(\Omega\),
then searching for an element of \(U_1 \cap U_2\) solves the graph isomorphism problem for \(\Gamma\) and \(\Delta\) in \(U_1\).

Let \(\mathcal{O}\) be a set on which \(\SOm\) acts (such as \(\Omega\) itself, or the set of all ordered partitions of \(\Omega\)).
In this paper we will present all search techniques in a common framework. The fundamental idea of this framework is to organise a backtrack search for \(U_1 \cap \cdots \cap U_k\) around a pair of stacks of objects in \(\mathcal{O}\).
At any point in the algorithm, when the pair of stacks is \((S,T)\) for some \(S,T \in \stacks\), the set of permutations being searched is \(\Iso{S}{T}\).
The stacks are initially empty, which means that the search begins with the whole symmetric group.
In each step down into the recursion, new stacks are appended to the existing ones. This may be done by a refiner, in an attempt to remove redundant parts of the search space (Section~\ref{sec-refiners}), or by a splitter, in order to divide the search space into smaller parts that can be searched recursively (see~\cite[Section~6]{GB_published}).

For such a backtrack search to be practical, the set \(\mathcal{O}\) should be easy to compute with, and in particular, it should be relatively cheap to compute stabilisers and transporter sets in \(\SOm\) of elements in \(\mathcal{O}\), or at least to obtain close overapproximations of them (see~\cite[Section~5]{GB_published}).
On the other hand, the set \(\mathcal{O}\) should be sufficiently rich and varied that refiners can construct stacks in \(\mathcal{O}\) that encode useful information about the problem at hand.

So far, backtrack search in finite symmetric groups has been formulated and implemented in several settings.
We use the term \emph{classical backtracking} for the case that \(\mathcal{O} = \Omega\); this is essentially the original backtrack search in \(\SOm\) introduced in~\cite{Sims1971}, although presented differently.
\emph{Partition backtracking} is backtrack search where the objects are ordered partitions of \(\Omega\); this is essentially the technique introduced in~\cite{Leon1991}.
We use the term \emph{graph backtracking} when the objects are labelled digraphs on \(\Omega\)~\cite{GB_published}.
In Section~\ref{sec-extended}, we introduce an advancement of this latter technique, which we call \emph{extended graph backtracking}.

\section{Refiners for backtrack search}\label{sec-refiners}

Throughout this section and Sections~\ref{sec-theory-perfect} and~\ref{sec-perfect-stab-transp}, we let \(\mathcal{O}\) be a set on which \(\SOm\) acts, so that the setting is backtrack search organised around stacks in \(\mathcal{O}\). In particular, all definitions and results are relative to the set \(\mathcal{O}\), even if we do not explicitly repeat that every single time. 

\begin{defn}\label{defn-refiner}
  A \emph{refiner} for a set of permutations \(U \subseteq \SOm\) is a pair \((f_L,f_R)\) of functions in \(\stacks^{\stacks}\) such that:
  \begin{equation}\label{refiner-condition}
    \text{For all}\ S,T \in \stacks,\ U \cap \Iso{S}{T} \subseteq \Iso{f_L(S)}{f_R(T)}.
    \tag{\(\ast\)}
  \end{equation}
\end{defn}


In practice, we focus on refiners for subsets of \(\SOm\) that are subgroups, cosets of subgroups, or empty.

We remark that any pair of functions in \(\stacks^{\stacks}\) is a refiner for the empty set and that it is necessary to include the empty set
as a possibility because, for example, it is common to search for transporter sets, which may be empty.

Let \((f_L,f_R)\) be a refiner for a set \(U\subseteq\SOm\),
let \(S, T \in \stacks\) with \(|S|=|T|\),
and suppose that we are part way through a search for an intersection of subsets of \(\SOm\) that includes \(U\),
with \(\Iso{S}{T}\) as the current search space.
In this situation, the aim of applying this refiner to the stacks \(S\) and
\(T\) is to prune elements of \(\SOm \setminus U\) from the current search space, by moving to the transporter set corresponding to the lengthened stacks \(\refine{f_L}{S}\) and \(\refine{f_R}{T}\).
By~\eqref{refiner-condition}, \(\Iso{\refine{f_L}{S}}{\refine{f_R}{T}}\) retains the elements of \(\Iso{S}{T}\) that \emph{are} in \(U\),
but on the other hand, \(\Iso{\refine{f_L}{S}}{\refine{f_R}{T}}\) is contained in \(\Iso{S}{T}\), perhaps properly, and may therefore lack some elements of \(\Iso{S}{T}\) that are \emph{not} in \(U\).
In particular, if \(\Iso{\refine{f_L}{S}}{\refine{f_R}{T}}\) is empty, then the search can backtrack.

Leon used the term \(\mathcal{P}\)\emph{-refinement} in his work on partition backtracking~\cite{Leon1991,Leon1997} for a similar concept
that is essentially compatible with our notion of a refiner,
although he presents it in a significantly different fashion.
Definition~\ref{defn-refiner} matches the notion of a refiner used in~\cite[Section~5]{GB_published}.

We remark that the definition of a refiner guarantees no particular success at pruning.
For example, if \(\iota\) is the identity map on \(\stacks\), and if \(\varepsilon\) is the constant map on \(\stacks\) whose image is the empty stack in \(\stacks\),
then \((\iota,\iota)\) and \((\varepsilon,\varepsilon)\) are refiners for every subset of \(\SOm\), even though neither performs any pruning.  Thus it is desirable to have a measure of the quality of a refiner.
In the following section we introduce perfect refiners, which are those that have maximal pruning power.

To simplify some forthcoming exposition, we introduce a way of applying permutations to functions in \(\stacks^{\stacks}\).
It is straightforward to verify that this defines an action of \(\SOm\).

\begin{notation}\label{notation-sym-on-refiners}
  For any \(f \in \stacks^{\stacks}\) and \(x \in \SOm\), we define  \(f^{x}\) as follows:\\
  \(f^{x}(S) \coloneqq f(S^{x^{-1}})^{x}\) for all \(S \in \stacks\).
\end{notation}

\begin{lemma}\label{lem-sym-acts-on-refiners}
  For all \(f \in \stacks^{\stacks}\) and \(x \in \SOm\), the map \(f^{x}\) explained above is in 
   \(\stacks^{\stacks}\).
   Moreover, \(\SOm\) acts on \(\stacks^{\stacks}\). 
\end{lemma}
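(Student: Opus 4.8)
The plan is to verify the two claims in Definition/Notation \ref{notation-sym-on-refiners} directly from the definition \(f^{x}(S) = f(S^{x^{-1}})^{x}\). For the first claim, I would observe that \(f^{x}\) is a composite of three maps: the map \(S \mapsto S^{x^{-1}}\) sending \(\stacks\) to \(\stacks\) (since \(\SOm\) acts on \(\mathcal{O}\) and hence on finite lists over \(\mathcal{O}\)), followed by \(f \in \stacks^{\stacks}\), followed by the map \(U \mapsto U^{x}\), again a self-map of \(\stacks\). As each of the three is a function from \(\stacks\) to \(\stacks\), their composite \(f^{x}\) lies in \(\stacks^{\stacks}\), so \(f^{x}\) is well defined.

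For the second claim, I would show that the assignment \((f,x)\mapsto f^{x}\) satisfies the two axioms of a (right) group action: that \(f^{\id} = f\), and that \((f^{x})^{y} = f^{xy}\) for all \(f \in \stacks^{\stacks}\) and \(x,y \in \SOm\). The identity axiom is immediate, since \(f^{\id}(S) = f(S^{\id})^{\id} = f(S)\) for every \(S\). For the compatibility axiom, I would evaluate both sides on an arbitrary \(S \in \stacks\). Unwinding the definition once gives \((f^{x})^{y}(S) = f^{x}(S^{y^{-1}})^{y}\), and unwinding \(f^{x}\) once more gives \(f^{x}(S^{y^{-1}}) = f\bigl((S^{y^{-1}})^{x^{-1}}\bigr)^{x}\), so that \((f^{x})^{y}(S) = f\bigl((S^{y^{-1}})^{x^{-1}}\bigr)^{xy}\). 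On the other hand, \(f^{xy}(S) = f\bigl(S^{(xy)^{-1}}\bigr)^{xy} = f\bigl(S^{y^{-1}x^{-1}}\bigr)^{xy}\).

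The two expressions then agree provided \((S^{y^{-1}})^{x^{-1}} = S^{y^{-1}x^{-1}}\), which is exactly the statement that the induced action of \(\SOm\) on stacks satisfies its own compatibility law; this in turn reduces, entry by entry, to the compatibility of the original action of \(\SOm\) on \(\mathcal{O}\), combined with the fact that the induced action on lists is defined coordinatewise by \([x_1,\ldots,x_k]^{g} = [x_1^{g},\ldots,x_k^{g}]\). I do not expect any genuine obstacle here: the whole argument is a routine unwinding of definitions, and the only point requiring care is bookkeeping with the inverse exponents, which is precisely why the definition uses \(x^{-1}\) on the input and \(x\) on the output—this is the standard device that converts the naturally contravariant push-forward into a genuine right action. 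The main substantive fact being invoked, namely that \(\SOm\) already acts on \(\stacks\), is inherited directly from the definitions in Section~\ref{sec-prelim}.
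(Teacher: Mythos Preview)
Your proposal is correct and is exactly the routine verification the paper has in mind; indeed, the paper does not spell out a proof at all, merely remarking before Notation~\ref{notation-sym-on-refiners} that ``it is straightforward to verify that this defines an action of \(\SOm\)''. Your decomposition of \(f^{x}\) as a composite of three self-maps of \(\stacks\), followed by checking the identity and compatibility axioms by unwinding the definition and appealing to the already-established action of \(\SOm\) on \(\stacks\), is precisely the intended argument.
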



The following results show the close relationships between the two functions that comprise a refiner for a subgroup (Lemma~\ref{lem-simple-refiner}) or more generally for a coset of a subgroup (Lemma~\ref{lem-refiner-group-coset}).
We note that Lemma~\ref{lem-refiner-group-coset} is a reformulation of~\cite[Lemma~4.6]{GB_published} that uses Notation~\ref{notation-sym-on-refiners}.

\begin{lemma}[\mbox{Lemma~4.4 in~\cite{GB_published}; cf.~\cite[Lemma~6]{Leon1991} and~\cite[Prop 2]{Leon1997}}]\label{lem-simple-refiner}
  If \((f_L,f_R)\) is a refiner for a subgroup of \(\SOm\), then \(f_L = f_R\).
\end{lemma}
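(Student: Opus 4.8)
The plan is to establish the functional equality \(f_L = f_R\) pointwise, that is, to show that \(f_L(S) = f_R(S)\) for every stack \(S \in \stacks\). To this end, I would specialise the defining condition~\eqref{refiner-condition} of a refiner by taking \(T \coloneqq S\), so that the transporter set \(\Iso{S}{T}\) appearing there becomes the stabiliser \(\Iso{S}{S} = \Auto{S}\). Writing \(G\) for the subgroup of which \((f_L, f_R)\) is assumed to be a refiner, this specialisation turns~\eqref{refiner-condition} into the inclusion \(G \cap \Auto{S} \subseteq \Iso{f_L(S)}{f_R(S)}\).

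The crucial observation---and really the only idea needed---is that a subgroup interacts well with the identity. The identity permutation \(\id\) lies in \(G\) because \(G\) is a subgroup, and it lies in \(\Auto{S}\) because every stabiliser contains the identity. Hence \(\id \in G \cap \Auto{S}\), and the inclusion above then forces \(\id \in \Iso{f_L(S)}{f_R(S)}\). Unwinding the definition of the transporter set, membership of \(\id\) means precisely that \(f_L(S)^{\id} = f_R(S)\), that is, \(f_L(S) = f_R(S)\).

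Since \(S \in \stacks\) was arbitrary, this yields \(f_L = f_R\) as elements of \(\stacks^{\stacks}\), which is the claim. I do not expect any genuine obstacle here: the whole argument rests on the single fact that a subgroup contains the identity, combined with the reflexivity of stabilisers. The only point requiring a moment's care is the legitimacy of the specialisation \(T = S\), which is immediate because~\eqref{refiner-condition} is quantified over \emph{all} pairs \(S, T \in \stacks\). Consequently the proof should occupy only a few lines.
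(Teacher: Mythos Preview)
Your proposal is correct and follows essentially the same argument as the paper: specialise~\eqref{refiner-condition} to \(T = S\), use that the identity of the subgroup lies in \(G \cap \Iso{S}{S}\), and read off \(f_L(S) = f_R(S)\). The paper's version is simply more compressed.
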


\begin{proof}
  Suppose that \((f_L,f_R)\) is a refiner for a group \(G\) and let \(S\in\stacks\).
  Since \(1_G \in \Iso{S}{S}\), it follows by~\eqref{refiner-condition} that \(1_G \in \Iso{f_L(S)}{f_R(S)}\), which means that \(f_L(S) = f_L(S)^{1_G} = f_R(S)\).
\end{proof}

\begin{lemma}\label{lem-refiner-group-coset}
  Let \(G \leq \SOm\), \(x \in \SOm\), and \(f_L,f_R \in \stacks^{\stacks}\).
  Then \((f_L,f_R)\) is a refiner for the right coset \(G x\) if and only if \((f_L,f_L)\) is a refiner for \(G\) and \(f_R = {f_L}^{x}\).
\end{lemma}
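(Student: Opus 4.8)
The plan is to reduce the coset statement to the subgroup case already settled by Lemma~\ref{lem-simple-refiner}, using a single set-theoretic identity that converts the transporter condition for $Gx$ into one for $G$. First I would establish, for all $S,T \in \stacks$, the identity
\[
Gx \cap \Iso{S}{T} = \bigl(G \cap \Iso{S}{T^{x^{-1}}}\bigr)\cdot x .
\]
This follows by noting that $g \in Gx \cap \Iso{S}{T}$ exactly when $g = hx$ with $h \in G$ and $S^{g} = T$; rewriting $S^{hx} = T$ as $S^{h} = T^{x^{-1}}$ shows $h \in G \cap \Iso{S}{T^{x^{-1}}}$, and the steps reverse. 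Equivalently, this is the relation $\Iso{S}{T} = \Iso{S}{T^{x^{-1}}}\cdot x$ recorded in Section~\ref{sec-prelim}, intersected with $Gx$.

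Next I would use this identity to prove the intermediate equivalence that $(f_L,f_R)$ is a refiner for $Gx$ if and only if $(f_L,{f_R}^{x^{-1}})$ is a refiner for $G$. Starting from condition~\eqref{refiner-condition} for $Gx$, I substitute the identity above and then right-multiply by $x^{-1}$, which preserves inclusions; using $\Iso{f_L(S)}{f_R(T)}\cdot x^{-1} = \Iso{f_L(S)}{f_R(T)^{x^{-1}}}$ turns the inclusion into $G \cap \Iso{S}{T^{x^{-1}}} \subseteq \Iso{f_L(S)}{f_R(T)^{x^{-1}}}$. I then reparametrise by setting $T' := T^{x^{-1}}$, which ranges over all of $\stacks$ as $T$ does, and recognise $f_R({T'}^{x})^{x^{-1}}$ as exactly ${f_R}^{x^{-1}}(T')$ straight from Notation~\ref{notation-sym-on-refiners}. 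The resulting inclusion $G \cap \Iso{S}{T'} \subseteq \Iso{f_L(S)}{{f_R}^{x^{-1}}(T')}$, valid for all $S,T'$, is precisely condition~\eqref{refiner-condition} for $G$ applied to the pair $(f_L,{f_R}^{x^{-1}})$.

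Finally I would invoke Lemma~\ref{lem-simple-refiner} to close both directions. For the forward direction, if $(f_L,f_R)$ refines $Gx$ then $(f_L,{f_R}^{x^{-1}})$ refines the subgroup $G$, so Lemma~\ref{lem-simple-refiner} gives $f_L = {f_R}^{x^{-1}}$; applying the action of $x$ and using that $\SOm$ acts on $\stacks^{\stacks}$ (Lemma~\ref{lem-sym-acts-on-refiners}) yields $f_R = {f_L}^{x}$, whence $(f_L,f_L) = (f_L,{f_R}^{x^{-1}})$ refines $G$. For the converse, if $(f_L,f_L)$ refines $G$ and $f_R = {f_L}^{x}$, then ${f_R}^{x^{-1}} = f_L$ by the action axioms, so $(f_L,{f_R}^{x^{-1}})$ refines $G$ and the intermediate equivalence gives that $(f_L,f_R)$ refines $Gx$.

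I expect the main obstacle to be bookkeeping rather than anything conceptual: keeping the left/right placement of $x$ and $x^{-1}$ consistent across the coset and transporter manipulations, and correctly matching $f_R({T'}^{x})^{x^{-1}}$ with the definition of ${f_R}^{x^{-1}}$ in Notation~\ref{notation-sym-on-refiners}. Once the reparametrisation $T' = T^{x^{-1}}$ is in place, the remainder is a single appeal to Lemma~\ref{lem-simple-refiner} together with the action axioms.
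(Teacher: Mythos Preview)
Your proof is correct. The paper does not actually supply a proof of Lemma~\ref{lem-refiner-group-coset}; it simply records that the statement is a reformulation of~\cite[Lemma~4.6]{GB_published} in the language of Notation~\ref{notation-sym-on-refiners}. So there is no in-paper argument to compare against directly.

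That said, your approach is precisely the one implicit in the paper's treatment of the perfect analogue, Lemma~\ref{lem-perfect-refiner-group-coset}: there the proof also pivots on the identity $Gy \cap \Iso{S}{T} = (Gx \cap \Iso{S}{T^{y^{-1}x}})\cdot x^{-1}y$, reparametrises the right-hand stack, and unwinds the definition of $f^{x}$ from Notation~\ref{notation-sym-on-refiners}. Your version specialises this to $y = 1_G$ and adds the appeal to Lemma~\ref{lem-simple-refiner} to force $f_L = {f_R}^{x^{-1}}$. All of the bookkeeping you flag (the placement of $x$ versus $x^{-1}$, and the identification $f_R({T'}^{x})^{x^{-1}} = {f_R}^{x^{-1}}(T')$) is handled correctly, and the equivalence in your intermediate step is genuinely reversible since each manipulation (right-multiplication by $x^{\pm 1}$, the substitution $T \leftrightarrow T'^{x}$) is a bijection.
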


By Lemmas~\ref{lem-simple-refiner} and~\ref{lem-refiner-group-coset}, a refiner
for a group or coset is built from a single function from \(\stacks\) to itself. The following lemma gives equivalent conditions for such a function to yield a refiner (always with respect to the set \(\mathcal{O}\), as mentioned earlier):

\begin{lemma}
  Let \(G \leq \SOm\) and \(f \in \stacks^{\stacks}\).
  The following are equivalent:
  \begin{enumerate}[label=\emph{(\roman*)},ref=\textrm{(\roman*)}]
    \item\label{item-sym}
      \((f, f)\) is a refiner for \(G\).
    \item\label{item-invariant}
      \(f(S^{x}) = f(S)^{x}\) for all \(S \in \stacks\) and \(x \in G\).
    \item\label{item-G-in-stab}
      \(G \leq \Auto{f}\).
  \end{enumerate}
\end{lemma}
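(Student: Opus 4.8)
The plan is to establish all three equivalences by directly unwinding the definitions, since each statement turns out to be a verbatim restatement of the others once the action of Notation~\ref{notation-sym-on-refiners} is made explicit. I would organise the argument as \ref{item-invariant}~\(\iff\)~\ref{item-G-in-stab} followed by \ref{item-sym}~\(\iff\)~\ref{item-invariant}, so that the more computational equivalence with the refiner condition is handled last.

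First, for \ref{item-invariant}~\(\iff\)~\ref{item-G-in-stab}: by definition of the stabiliser, \(G \leq \Auto{f}\) means \(f^{x} = f\) for every \(x \in G\), which by Notation~\ref{notation-sym-on-refiners} says \(f(S^{x^{-1}})^{x} = f(S)\) for all \(S \in \stacks\). Substituting \(S = T^{x}\), which for each fixed \(x\) is a bijection of \(\stacks\), rewrites this as \(f(T^{x}) = f(T)^{x}\) for all \(T \in \stacks\); quantified over \(x \in G\) this is precisely \ref{item-invariant}. The only care needed here is tracking the inverse in the action and performing the substitution correctly.

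Next, \ref{item-sym}~\(\iff\)~\ref{item-invariant}: by~\eqref{refiner-condition} with \(f_L = f_R = f\) and \(U = G\), condition \ref{item-sym} says that \(f(S)^{g} = f(T)\) whenever \(g \in G\) and \(S^{g} = T\). For \ref{item-sym}~\(\Rightarrow\)~\ref{item-invariant} I would fix \(S\) and \(x \in G\) and specialise to \(T = S^{x}\); since \(x \in G \cap \Iso{S}{S^{x}}\), the refiner condition yields \(x \in \Iso{f(S)}{f(S^{x})}\), that is \(f(S^{x}) = f(S)^{x}\), which is \ref{item-invariant}. For the converse, given \ref{item-invariant} and arbitrary \(S,T \in \stacks\), any \(g \in G \cap \Iso{S}{T}\) satisfies \(T = S^{g}\), so \ref{item-invariant} gives \(f(T) = f(S^{g}) = f(S)^{g}\), i.e.\ \(g \in \Iso{f(S)}{f(T)}\); hence \eqref{refiner-condition} holds and \ref{item-sym} follows (when \(G \cap \Iso{S}{T}\) is empty the inclusion is vacuous).

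The conceptual point underlying both equivalences is that the pairs \((S,T)\) imposing a genuine constraint in \eqref{refiner-condition} are exactly those with \(T\) in the \(G\)-orbit of \(S\), so quantifying over such pairs amounts to quantifying over a single stack \(S\) together with an element \(x \in G\). I do not anticipate a real obstacle: the proof is pure definition-chasing, and the only genuine risk is an inverse or substitution slip, which I would guard against by verifying both directions of each equivalence explicitly.
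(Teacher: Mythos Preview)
Your proposal is correct and follows essentially the same approach as the paper: both prove \ref{item-sym}~\(\Leftrightarrow\)~\ref{item-invariant} by specialising \eqref{refiner-condition} to \(T = S^{x}\) in one direction and using \(T = S^{g}\) for \(g \in G \cap \Iso{S}{T}\) in the other, and both treat \ref{item-invariant}~\(\Leftrightarrow\)~\ref{item-G-in-stab} as an unwinding of Notation~\ref{notation-sym-on-refiners}. The only differences are cosmetic: you reverse the order of the two equivalences and spell out the substitution \(S = T^{x}\) for \ref{item-invariant}~\(\Leftrightarrow\)~\ref{item-G-in-stab}, which the paper leaves implicit.
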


\begin{proof}
  Suppose that~\ref{item-sym} holds and let \(S \in \stacks\) and \(x \in G\).
  By Definition~\ref{defn-refiner}, \(G \cap \Iso{S}{S^{x}} \subseteq \Iso{f(S)}{f(S^{x})}\).
  Then~\ref{item-invariant} holds, since \(x \in G \cap \Iso{S}{S^{x}}\).
  Conversely, suppose that~\ref{item-invariant} holds and let \(S,T \in \stacks\).
  If \(x \in G \cap \Iso{S}{T}\), then \(T = S^{x}\), and since \(f(S)^{x} = f(S^{x})\) by assumption, it follows that \(x \in \Iso{f(S)}{f(T)}\). Therefore~\ref{item-sym} holds by Definition~\ref{defn-refiner}.

  The
  equivalence of~\ref{item-invariant} and~\ref{item-G-in-stab} is clear from Notation~\ref{notation-sym-on-refiners} and the definition of \(\Auto{f}\).
\end{proof}

Next, we show a way for refiners to be combined to give a refiner for an intersection of sets.

\begin{notation}\label{notation-refiner-combine}
  For all \(f,g \in \stacks^{\stacks}\),
  we define the function \(f \mathop{\Vert} g \in \stacks^{\stacks}\) by
  \((f \mathop{\Vert} g)(S) \coloneqq f(S) \mathop{\Vert} g(S)\) for all \(S \in \stacks\).
\end{notation}

\begin{lemma}\label{lem-refiner-intersection}
  Let \((f, \sigma)\) be a refiner for the set \(U \subseteq \SOm\),
  and let \((g, \tau)\) be a refiner for \(V \subseteq \SOm\).
  Then \((f \mathop{\Vert} g, \sigma \mathop{\Vert} \tau)\) is a refiner for \(U \cap V\).
\end{lemma}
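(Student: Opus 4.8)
The plan is to verify the defining condition~\eqref{refiner-condition} for the pair \((f \mathop{\Vert} g,\ \sigma \mathop{\Vert} \tau)\) with respect to the set \(U \cap V\) by a direct element chase. Concretely, I would fix arbitrary stacks \(S, T \in \stacks\) and an arbitrary element \(h \in (U \cap V) \cap \Iso{S}{T}\), and then aim to show that \(h \in \Iso{(f \mathop{\Vert} g)(S)}{(\sigma \mathop{\Vert} \tau)(T)}\); since \(S\), \(T\), and \(h\) are arbitrary, this is exactly what~\eqref{refiner-condition} requires.

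The crucial point is that the \emph{same} element \(h\) can be fed into both hypotheses. Because \((U \cap V) \cap \Iso{S}{T} \subseteq U \cap \Iso{S}{T}\), applying~\eqref{refiner-condition} to the refiner \((f, \sigma)\) for \(U\) gives \(h \in \Iso{f(S)}{\sigma(T)}\), that is, \(f(S)^{h} = \sigma(T)\). Symmetrically, since \(h \in V \cap \Iso{S}{T}\), the refiner condition for \((g, \tau)\) yields \(g(S)^{h} = \tau(T)\).

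It then remains to combine these two equalities into a single statement about the concatenated stacks produced by Notation~\ref{notation-refiner-combine}. The only elementary fact needed is that the induced action of \(\SOm\) on lists commutes with concatenation, namely \((A \Vert B)^{h} = A^{h} \Vert B^{h}\), which is immediate from the entry-wise definition of the action on lists. Applying this gives
\[
(f \mathop{\Vert} g)(S)^{h}
= \bigl(f(S) \Vert g(S)\bigr)^{h}
= f(S)^{h} \Vert g(S)^{h}
= \sigma(T) \Vert \tau(T)
= (\sigma \mathop{\Vert} \tau)(T),
\]
so \(h \in \Iso{(f \mathop{\Vert} g)(S)}{(\sigma \mathop{\Vert} \tau)(T)}\), as desired. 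I do not anticipate any real obstacle, as the argument is essentially bookkeeping. The one subtlety worth noting is that one never needs a separate check on whether the lengths of \(f(S)\) and \(\sigma(T)\) match: the equality \(f(S)^{h} = \sigma(T)\) already forces \(|f(S)| = |\sigma(T)|\). This is precisely why the direct chase above is cleaner than attempting to split \(\Iso{\cdot}{\cdot}\) as an intersection over the two halves of the concatenation, which would demand such a length argument up front.
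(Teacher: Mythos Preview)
Your argument is correct. The paper's proof takes a slightly different, set-level route: it writes
\[
(U \cap V) \cap \Iso{S}{T}
= \bigl(U \cap \Iso{S}{T}\bigr) \cap \bigl(V \cap \Iso{S}{T}\bigr)
\subseteq \Iso{f(S)}{\sigma(T)} \cap \Iso{g(S)}{\tau(T)}
\]
and then wants to identify the right-hand side with \(\Iso{(f \Vert g)(S)}{(\sigma \Vert \tau)(T)}\). That identification is only valid when \(|f(S)| = |\sigma(T)|\), so the paper must treat the case \(|f(S)| \neq |\sigma(T)|\) separately, arguing there that \(U \cap \Iso{S}{T} = \varnothing\) and hence the desired containment holds trivially. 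Your element chase bypasses this case split entirely, exactly for the reason you state at the end: the equality \(f(S)^{h} = \sigma(T)\) obtained from a witnessing \(h\) already forces the lengths to agree. So both proofs are fine; yours is a little more streamlined, while the paper's set-level formulation makes the structure \(\Iso{A}{B} \cap \Iso{C}{D} = \Iso{A \Vert C}{B \Vert D}\) explicit, which is reused later (e.g.\ in Lemma~\ref{lem-perfect-refiner-intersection}).
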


\begin{proof}
  We show that the set \(U \cap V\) and the pair of functions \((f \mathop{\Vert} g, \sigma \mathop{\Vert} \tau)\) satisfy~\eqref{refiner-condition}.
  Let \(S,T \in \stacks\).
  If \(|f(S)| \neq |\sigma(T)|\), then \(\Iso{f(S)}{\sigma(T)} = \varnothing\), and so \(U \cap \Iso{S}{T} = \varnothing\), since \((f,\sigma)\) is a refiner for \(U\).
  In particular, \((U \cap V) \cap \Iso{S}{T} = \varnothing\).
  If instead \(|f(S)| = |\sigma(T)|\), then
  \begin{align*}
    (U \cap V) \cap \Iso{S}{T}
      &         = (U \cap \Iso{S}{T}) \cap (V \cap \Iso{S}{T}) \\
      & \subseteq \Iso{f(S)}{\sigma(T)}) \cap \Iso{g(S)}{\tau(T)} \\
      &         = \Iso{f(S) \Vert g(S)}{\sigma(T) \mathop{\Vert} \tau(T)} \\
      &         = \Iso{(f \mathop{\Vert} g)(S)}{(\sigma \mathop{\Vert} \tau)(T)}.
      \qedhere
  \end{align*}
\end{proof}

We remark that the \(\Vert\) operation is associative, and that we may repeatedly apply Lemma~\ref{lem-refiner-intersection} to obtain a refiner for any finite intersection of sets, given a refiner for each set.

\section{Perfect refiners}\label{sec-theory-perfect}

\begin{lemma}\label{lem-perfect-is-refiner}
  Let \(U \subseteq \SOm\),
  let \(f_{L},f_{R} \in \stacks^{\stacks}\), and suppose that
  \begin{equation}\label{perfect-condition}
    U \cap \Iso{S}{T} = \Iso{\refine{f_L}{S}}{\refine{f_R}{T}}
    \tag{\(\circledast\)}
  \end{equation}
  for all \(S, T \in \stacks\) with \(|S| = |T|\).
  Then \((f_{L}, f_{R})\) is a refiner for \(U\).
\end{lemma}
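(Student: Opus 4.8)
The plan is to verify the defining inclusion \eqref{refiner-condition} of a refiner directly, namely that \(U \cap \Iso{S}{T} \subseteq \Iso{f_L(S)}{f_R(T)}\) for all \(S,T \in \stacks\). Since the hypothesis \eqref{perfect-condition} is only assumed when \(|S| = |T|\), I would first split into two cases according to whether the lengths of \(S\) and \(T\) agree. The case \(|S| \neq |T|\) is immediate: no permutation can map a list onto a list of different length, so \(\Iso{S}{T} = \varnothing\), whence \(U \cap \Iso{S}{T} = \varnothing\) and the desired inclusion holds vacuously.

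For the case \(|S| = |T|\), the key observation is that lengthening both stacks merely intersects the current transporter set with a further condition. Concretely, because the action on lists is entrywise, a permutation \(g\) satisfies \((\refine{f_L}{S})^g = \refine{f_R}{T}\) if and only if it simultaneously carries the initial segment \(S\) onto \(T\) and the terminal segment \(f_L(S)\) onto \(f_R(T)\); here equality of the total lengths together with \(|S| = |T|\) forces \(|f_L(S)| = |f_R(T)|\), so the two segments match up. This yields
\[
  \Iso{\refine{f_L}{S}}{\refine{f_R}{T}}
    = \Iso{S}{T} \cap \Iso{f_L(S)}{f_R(T)}
    \subseteq \Iso{f_L(S)}{f_R(T)}.
\]
Combining this with \eqref{perfect-condition}, which for \(|S| = |T|\) states \(U \cap \Iso{S}{T} = \Iso{\refine{f_L}{S}}{\refine{f_R}{T}}\), gives \(U \cap \Iso{S}{T} \subseteq \Iso{f_L(S)}{f_R(T)}\), as required by Definition~\ref{defn-refiner}.

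I expect no serious obstacle: the content is essentially that the (stronger) equality defining a perfect refiner entails the (weaker) inclusion defining a refiner, once one recognises that appending \(f_L(S)\) and \(f_R(T)\) to the stacks corresponds to intersecting \(\Iso{S}{T}\) with \(\Iso{f_L(S)}{f_R(T)}\). The only point demanding a little care is the bookkeeping of list lengths: \eqref{perfect-condition} is hypothesised only for \(|S| = |T|\), so the degenerate length-mismatch case must be dealt with separately in order to reach the unrestricted ``for all \(S,T\)'' quantifier of \eqref{refiner-condition}.
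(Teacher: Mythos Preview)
Your proof is correct and follows essentially the same approach as the paper: split off the trivial case where \(\Iso{S}{T}\) is empty (the paper phrases this as ``\(\Iso{S}{T}=\varnothing\)'' rather than ``\(|S|\neq|T|\)'', but this is immaterial), and in the remaining case use \eqref{perfect-condition} together with the identity \(\Iso{\refine{f_L}{S}}{\refine{f_R}{T}} = \Iso{S}{T}\cap\Iso{f_L(S)}{f_R(T)}\) to obtain the required inclusion.
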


\begin{proof}
  To show that~\eqref{refiner-condition} holds, let \(S,T\in\stacks\).
  If \(\Iso{S}{T} = \varnothing\), then this is clear, so suppose otherwise; in particular, suppose that \(|S| = |T|\). Then
  \begin{align*}
    U \cap \Iso{S}{T} & = \Iso{\refine{f_L}{S}}{\refine{f_R}{T}} \\
                      & = \Iso{S}{T} \cap \Iso{f_L(S)}{f_R(T)} \\
                      & \subseteq \Iso{f_L(S)}{f_R(T)}. \qedhere
  \end{align*}
\end{proof}

\begin{defn}\label{defn-perfect}
  Refiners with the property~\eqref{perfect-condition} from Lemma~\ref{lem-perfect-is-refiner} are called \emph{perfect refiners} (with respect to \(\mathcal{O}\), which we will usually omit).
\end{defn}

We give several examples of perfect refiners in Section~\ref{sec-examples-perfect}.
Note that a refiner for a set \(U\subseteq\SOm\) is also a refiner for any proper subset of \(U\), but never a perfect refiner.
The equation `\(|S| = |T|\)' in~\eqref{perfect-condition} is included for
convenience:
in principle, two stacks of different lengths (whose transporter set is empty) could be extended to stacks with nonempty transporter set.
However, a refiner is only ever applied to pairs of stacks of equal lengths, so we remove the need to deal with this complication.

During a search, a perfect refiner for \(U\) can be used to extend the stacks \(S\) and \(T\) (representing the current search space \(\Iso{S}{T}\)) to obtain the potentially-smaller search space \(\Iso{S}{T} \cap U\).
Thus a perfect refiner for \(U\) allows \(U\) to be represented with full fidelity,
and no effort must be wasted considering elements of \(\SOm\) that are not in \(U\). In other words:
Perfect refiners are the refiners with maximal pruning power.

A perfect refiner therefore needs to be applied at most once in any branch of search, and should therefore be applied at the root node to avoid unnecessary repetition.
It is for this reason that perfect refiners often comprise constant functions in practice; see Lemma~\ref{lem-refiner-constant}.

Note that if each set \(U_i\) in a search for \(U_1 \cap \cdots \cap U_n\) is given with a perfect refiner, then the search can terminate at the root node without splitting or backtracking.
This is because the search begins with \(S\) and \(T\) being empty stacks, and so their transporter set is initially \(\SOm\).  Applying the perfect refiners in turn then gives stacks with transporter set \(U_1 \cap \cdots \cap U_n\), as required.

In the following lemmas, we see that constructing a perfect refiner for a set \(U\) is equivalent to finding a pair of stacks in \(\mathcal{O}\) with transporter set \(U\).
It follows that the existence of a perfect refiner for any given subset of
\(\SOm\), in backtrack search organised around stacks in \(\mathcal{O}\), depends on the choice of \(\mathcal{O}\). But as before, we will not always add ``with respect to \(\mathcal{O}\)''.

\begin{lemma}\label{lem-refiner-constant}
  Let \(U \subseteq \SOm\), and suppose there exist \(A,B \in \stacks\) such that \(U \subseteq \Iso{A}{B}\).
  Let \(f_A\) and \(f_B\) be constant functions on \(\stacks\) with images \(A\) and \(B\), respectively.
  Then \((f_A, f_B)\) is a refiner for \(U\).
  Moreover, if \(U = \Iso{A}{B}\), then this refiner is perfect.
\end{lemma}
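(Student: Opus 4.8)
The plan is to verify the two assertions directly from the definitions, exploiting the fact that \(f_A\) and \(f_B\) are constant. Throughout, I would keep in mind that for every pair of stacks \(S, T \in \stacks\) we have \(f_A(S) = A\) and \(f_B(T) = B\), and hence \(\refine{f_A}{S} = S \mathop{\Vert} A\) and \(\refine{f_B}{T} = T \mathop{\Vert} B\).

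For the first claim, I would check the refiner condition~\eqref{refiner-condition} for arbitrary \(S, T \in \stacks\). Because \(f_A(S) = A\) and \(f_B(T) = B\), the right-hand side of~\eqref{refiner-condition} is simply \(\Iso{A}{B}\), so the condition reduces to \(U \cap \Iso{S}{T} \subseteq \Iso{A}{B}\). This is immediate from the hypothesis \(U \subseteq \Iso{A}{B}\), since \(U \cap \Iso{S}{T} \subseteq U \subseteq \Iso{A}{B}\). Hence \((f_A, f_B)\) is a refiner for \(U\).

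For the second claim, I would assume \(U = \Iso{A}{B}\) and verify the perfect condition~\eqref{perfect-condition} for arbitrary \(S, T \in \stacks\) with \(|S| = |T|\). The key observation is that transporter sets of concatenated stacks split as intersections once the lengths of the initial segments agree: for \(g \in \SOm\) we have \((S \mathop{\Vert} A)^{g} = S^{g} \mathop{\Vert} A^{g}\), and since \(|S^{g}| = |S| = |T|\), the equation \((S \mathop{\Vert} A)^{g} = T \mathop{\Vert} B\) holds if and only if both \(S^{g} = T\) and \(A^{g} = B\) (this is the same splitting already used in the proof of Lemma~\ref{lem-refiner-intersection}). Consequently
\[
  \Iso{\refine{f_A}{S}}{\refine{f_B}{T}} = \Iso{S \mathop{\Vert} A}{T \mathop{\Vert} B} = \Iso{S}{T} \cap \Iso{A}{B} = \Iso{S}{T} \cap U = U \cap \Iso{S}{T},
\]
which is exactly~\eqref{perfect-condition}.

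There is no serious obstacle here; the only point that warrants care is that the concatenation identity \(\Iso{S \mathop{\Vert} A}{T \mathop{\Vert} B} = \Iso{S}{T} \cap \Iso{A}{B}\) genuinely relies on \(|S| = |T|\), which is precisely why that hypothesis is built into the definition of a perfect refiner. I would also note that the argument is uniform in whether \(U\) is empty: if \(\Iso{A}{B} = \varnothing\) then the displayed chain still yields equality, with both sides empty (for example, when \(|A| \neq |B|\) the condition \(|S| = |T|\) forces \(|S \mathop{\Vert} A| \neq |T \mathop{\Vert} B|\), so the left-hand transporter set is empty as well).
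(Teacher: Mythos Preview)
Your proof is correct and follows essentially the same approach as the paper's: both verify~\eqref{refiner-condition} via the chain \(U \cap \Iso{S}{T} \subseteq U \subseteq \Iso{A}{B} = \Iso{f_A(S)}{f_B(T)}\), and both verify~\eqref{perfect-condition} by rewriting \(\Iso{\refine{f_A}{S}}{\refine{f_B}{T}}\) as \(\Iso{S}{T} \cap \Iso{A}{B}\) using the concatenation splitting that relies on \(|S| = |T|\). You are simply more explicit than the paper about why that splitting holds.
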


\begin{proof}
  The pair of functions satisfy the condition in Definition~\ref{defn-refiner}, since for all \(S,T\in\stacks\):
  \[
    \Iso{S}{T} \cap U \subseteq U \subseteq \Iso{A}{B} = \Iso{f_A(S)}{f_B(T)}.
  \]
  If \(U = \Iso{A}{B}\), then for all \(S,T\in\stacks\) with \(|S|=|T|\), condition~\eqref{perfect-condition} is satisfied, since
  \begin{align*}
    \Iso{S}{T} \cap U & = \Iso{S}{T} \cap \Iso{A}{B} \\
                      & = \Iso{S}{T} \cap \Iso{f_A(S)}{f_B(T)} \\
                      & = \Iso{\refine{f_A}{S}}{\refine{f_B}{T}}.
    \qedhere
  \end{align*}
\end{proof}

\begin{cor}\label{cor-group-refiner-constant}
  Let \(G \leq \SOm\), and suppose there exists \(A \in \stacks\) such that \(G \leq \Auto{A}\).
  Define \(f_A\) to be the constant function on \(\stacks\) with image \(A\).
  Then \((f_A,f_A)\) is a refiner for \(G\).
  Moreover, if \(G = \Auto{A}\), then this refiner is perfect.
\end{cor}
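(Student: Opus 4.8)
The plan is to obtain this as an immediate specialisation of Lemma~\ref{lem-refiner-constant}, taking \(B = A\) and \(U = G\). The single observation that makes everything go through is definitional: by the definition of the stabiliser given in Section~\ref{sec-prelim}, we have \(\Auto{A} = \Iso{A}{A}\). Hence the hypothesis \(G \leq \Auto{A}\) is literally the hypothesis \(U \subseteq \Iso{A}{B}\) of Lemma~\ref{lem-refiner-constant} in the case \(U = G\) and \(B = A\).

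Having matched the hypotheses, I would invoke Lemma~\ref{lem-refiner-constant} directly. Since we are setting \(B = A\), the two constant functions \(f_A\) and \(f_B\) from that lemma coincide, so its conclusion that \((f_A, f_B)\) is a refiner for \(U\) reads here as the assertion that \((f_A, f_A)\) is a refiner for \(G\). For the ``moreover'' clause, the added assumption \(G = \Auto{A}\) becomes \(U = \Iso{A}{A} = \Iso{A}{B}\), which is exactly the equality hypothesis under which Lemma~\ref{lem-refiner-constant} guarantees the refiner is perfect; so perfection transfers immediately.

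There is no real obstacle to anticipate: the entire content is the identification \(\Auto{A} = \Iso{A}{A}\), after which the corollary is a verbatim instance of the preceding lemma with \(B\) collapsed to \(A\). The only thing to be careful about is to state explicitly that \(f_A = f_B\) when \(B = A\), so that the reader sees why the pair in the conclusion is \((f_A, f_A)\) rather than a pair of distinct functions.
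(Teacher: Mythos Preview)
Your proposal is correct and is precisely the intended argument: the paper states this result as a corollary with no proof, since it is an immediate specialisation of Lemma~\ref{lem-refiner-constant} with \(B = A\) and \(U = G\), using \(\Auto{A} = \Iso{A}{A}\).
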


\begin{lemma}\label{lem-when-perfect-refiners-exist}
  Let \(U \subseteq \SOm\).
  There exists a perfect refiner for \(U\) if and only if \(U = \Iso{S}{T}\) for some \(S, T \in \stacks\).
  In particular, there exists a perfect refiner for a group \(G \leq \SOm\) if and only if \(G = \Auto{S}\) for some \(S\in\stacks\).
\end{lemma}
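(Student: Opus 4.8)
The plan is to prove the general biconditional by evaluating the defining equation~\eqref{perfect-condition} of a perfect refiner at the empty stacks, where the transporter set is all of \(\SOm\), and then to deduce the group statement as a special case.

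For the backward direction of the general statement, suppose \(U = \Iso{S}{T}\) for some \(S, T \in \stacks\). Then Lemma~\ref{lem-refiner-constant}, applied with \(A = S\) and \(B = T\), immediately yields a perfect refiner for \(U\): the pair of constant functions with images \(S\) and \(T\). So the substantive direction is the forward one. For it, suppose \((f_L, f_R)\) is a perfect refiner for \(U\), and let \(E \in \stacks\) denote the empty stack. Every permutation fixes \(E\), and \(|E| = |E|\), so \(\Iso{E}{E} = \SOm\) and~\eqref{perfect-condition} applies to the pair \((E, E)\). Evaluating it there gives
\[
  U = U \cap \SOm = U \cap \Iso{E}{E} = \Iso{\refine{f_L}{E}}{\refine{f_R}{E}}.
\]
Since concatenating the empty stack changes nothing, \(\refine{f_L}{E} = E \mathop{\Vert} f_L(E) = f_L(E)\), and likewise \(\refine{f_R}{E} = f_R(E)\). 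Hence, setting \(S \coloneqq f_L(E)\) and \(T \coloneqq f_R(E)\), we obtain \(U = \Iso{S}{T}\), as required.

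For the group statement, the backward direction is the special case \(U = G = \Iso{S}{S} = \Auto{S}\) of what we have just proved. For the forward direction, if a perfect refiner for \(G\) exists, then the general result gives \(G = \Iso{S}{T}\) for some \(S, T \in \stacks\); since \(G\) is a group, the identity lies in \(G = \Iso{S}{T}\), which forces \(S = S^{1} = T\), and therefore \(G = \Iso{S}{S} = \Auto{S}\). (One could instead invoke Lemma~\ref{lem-simple-refiner} to obtain \(f_L = f_R\) and hence \(S = T\) directly, but the identity-membership argument is more self-contained.)

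The argument is short, and I expect no genuine obstacle: the one real idea is to test~\eqref{perfect-condition} at the empty stacks, where \(\Iso{E}{E} = \SOm\) collapses the left-hand side to \(U\) itself; everything else is bookkeeping with the concatenation notation. The only point needing a moment's care is the reduction in the group case from a pair of stacks to a single stack, which rests entirely on the identity forcing \(S = T\).
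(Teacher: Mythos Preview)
Your proof is correct and follows essentially the same approach as the paper: both evaluate~\eqref{perfect-condition} at the empty stack to obtain \(U = \Iso{f_L(E)}{f_R(E)}\), and both invoke Lemma~\ref{lem-refiner-constant} for the converse. You are simply more explicit than the paper in simplifying \(E \Vert f_L(E)\) to \(f_L(E)\) and in spelling out the group case via the identity forcing \(S = T\), which the paper leaves implicit.
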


\begin{proof}
  Let \((f_{L}, f_{R})\) be a perfect refiner for \(U\) and let \(S\) be the empty stack in \(\stacks\).
  Note that \(\Iso{S}{S} = \SOm\). Then \(U = \Iso{f_L(S)}{f_R(S)}\) by Definition~\ref{defn-perfect}.
  The converse implication follows from Lemma~\ref{lem-refiner-constant}.
\end{proof}

\begin{cor}\label{cor-when-perfect-refiners-exist}
  If there exists a perfect refiner for a subset \(U \subseteq \SOm\),
  then either \(U\) is empty, or \(U\) is a subgroup, or a coset of a subgroup, of \(\SOm\).
\end{cor}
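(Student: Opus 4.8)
The plan is to reduce the statement to a basic fact about transporter sets of stacks and then verify that fact directly. First I would invoke Lemma~\ref{lem-when-perfect-refiners-exist}: since there exists a perfect refiner for \(U\), we may write \(U = \Iso{S}{T}\) for some \(S, T \in \stacks\). It therefore suffices to show that every such transporter set is either empty or a coset of a subgroup of \(\SOm\), with the subgroup case arising precisely when the coset contains the identity.

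Next I would split into two cases according to whether \(U\) is empty. If \(\Iso{S}{T} = \varnothing\), then \(U\) is empty and we are done. Otherwise, I would choose some \(x \in \Iso{S}{T}\), so that \(T = S^{x}\), and claim that \(\Iso{S}{T} = \Auto{S} \cdot x\). This is exactly the relation \(\Iso{S}{S^{x}} = \Iso{S}{S} \cdot x\) recorded earlier for elements of \(\mathcal{O}\), now applied with the induced action of \(\SOm\) on stacks. For completeness I would verify it directly: for \(g \in \SOm\), we have \(g \in \Iso{S}{T}\) if and only if \(S^{g} = S^{x}\), if and only if \(S^{gx^{-1}} = S\), if and only if \(gx^{-1} \in \Auto{S}\), if and only if \(g \in \Auto{S} \cdot x\).

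Finally, since \(\Auto{S} = \Iso{S}{S}\) is the stabiliser of \(S\) under the induced action and hence a subgroup of \(\SOm\), the set \(U = \Auto{S} \cdot x\) is a right coset of a subgroup; it equals the subgroup \(\Auto{S}\) precisely when \(x \in \Auto{S}\). This exhausts the three possibilities in the statement. I do not anticipate any real obstacle: the only points requiring a little care are confirming that \(\Auto{S}\) is genuinely a subgroup under the induced action on stacks (which is immediate from the action axioms) and handling the empty case separately, since an empty transporter set is not itself a coset.
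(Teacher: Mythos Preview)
Your proposal is correct and matches the paper's intended reasoning. The paper actually provides no explicit proof for this corollary, treating it as immediate from Lemma~\ref{lem-when-perfect-refiners-exist} together with the fact (recorded in Section~\ref{sec-prelim}) that \(\Iso{x}{y^{h}} = \Iso{x}{y} \cdot h\); your argument simply spells out that standard coset computation for stacks, so there is nothing to add.
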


\begin{lemma}[cf.\ Lemma~\ref{lem-refiner-group-coset}]\label{lem-perfect-refiner-group-coset}
  Let \(G \leq \SOm\), let \(x \in \SOm\), and let \(f_L,f_R \in \stacks^{\stacks}\).
  Then \((f_L,f_R)\) is a perfect refiner for \(G x\) if and only if \((f_L,f_L)\) is a perfect refiner for \(G\) and \(f_R = f_L^{x}\).
  In particular, there exists a perfect refiner for a group \(G \leq \SOm\) if and only if there exist perfect refiners for one, and hence all, cosets of \(G\) in \(\SOm\).
\end{lemma}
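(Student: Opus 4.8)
The plan is to reduce both implications to a single reversible computation by leaning on the non-perfect analogue, Lemma~\ref{lem-refiner-group-coset}. Every perfect refiner is a refiner (Lemma~\ref{lem-perfect-is-refiner}), so in the forward direction a perfect refiner $(f_L,f_R)$ for $Gx$ is in particular a refiner for $Gx$; Lemma~\ref{lem-refiner-group-coset} then yields $f_R = f_L^{x}$ at once, so that half of the conclusion is free, and it remains to show that $(f_L,f_L)$ satisfies the perfect equality~\eqref{perfect-condition} for $G$. In the backward direction, $f_R = f_L^{x}$ is assumed, and by Lemma~\ref{lem-perfect-is-refiner} it suffices to verify that $(f_L,f_R)$ satisfies~\eqref{perfect-condition} for $Gx$. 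Thus, in both directions, the task reduces to a single statement: under the standing assumption $f_R = f_L^{x}$, the pair $(f_L,f_R)$ satisfies~\eqref{perfect-condition} for $Gx$ if and only if $(f_L,f_L)$ satisfies~\eqref{perfect-condition} for $G$.

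The key computation is as follows. Fix $S,T\in\stacks$ with $|S|=|T|$ and write $T' \coloneqq T^{x^{-1}}$, so that $|S| = |T'|$ and $T = (T')^{x}$. Applying the transporter identity $\Iso{x}{y^{h}} = \Iso{x}{y}\cdot h$ with stacks in place of points gives $\Iso{S}{T} = \Iso{S}{T'}\cdot x$, and hence
\[ Gx \cap \Iso{S}{T} = Gx \cap (\Iso{S}{T'}\cdot x) = (G \cap \Iso{S}{T'})\cdot x. \]
On the other side, unwinding Notation~\ref{notation-sym-on-refiners} yields $f_R(T) = f_L^{x}(T) = f_L(T^{x^{-1}})^{x} = f_L(T')^{x}$, so that, by distributivity of the action over concatenation, $T \mathop{\Vert} f_R(T) = (T' \mathop{\Vert} f_L(T'))^{x}$, and therefore
\[ \Iso{\refine{f_L}{S}}{\refine{f_R}{T}} = \Iso{S \mathop{\Vert} f_L(S)}{T' \mathop{\Vert} f_L(T')}\cdot x = \Iso{\refine{f_L}{S}}{\refine{f_L}{T'}}\cdot x. \]
Comparing the two displays, the perfect equality for $Gx$ at $(S,T)$ is equivalent, after cancelling the right factor $x$ (right multiplication by $x$ being a bijection of $\SOm$), to the perfect equality for $G$ at $(S,T')$. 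Since $T \mapsto T^{x^{-1}}$ is a length-preserving bijection of $\stacks$, letting $(S,T)$ range over all equal-length pairs is the same as letting $(S,T')$ range over all equal-length pairs; this establishes the desired equivalence and completes the ``if and only if''.

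I expect the only delicate point to be the bookkeeping around Notation~\ref{notation-sym-on-refiners}: one must unwind $f_R = f_L^{x}$ correctly as $f_R(T) = f_L(T^{x^{-1}})^{x}$, and check that the substitution $T' = T^{x^{-1}}$ is a genuine length-preserving bijection of $\stacks$, so that quantifying over $(S,T)$ and over $(S,T')$ agree. There is no deeper obstacle: everything rests on the transporter identity, the distributivity $(K \mathop{\Vert} L)^{x} = K^{x} \mathop{\Vert} L^{x}$, and the cancellability of right multiplication, exactly mirroring the proof of Lemma~\ref{lem-refiner-group-coset}.

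Finally, the ``in particular'' clause follows formally from the equivalence just proved, together with the fact that $f_L^{x} \in \stacks^{\stacks}$ for every $x$ (Lemma~\ref{lem-sym-acts-on-refiners}). If $(f_L,f_L)$ is a perfect refiner for $G$, then for each $x \in \SOm$ the pair $(f_L, f_L^{x})$ is a perfect refiner for the coset $Gx$, so every coset of $G$ admits a perfect refiner. Conversely, if some coset $Gx$ admits a perfect refiner $(f_L,f_R)$, then $(f_L,f_L)$ is a perfect refiner for $G$. Since $G = G\cdot 1$ is itself one of its cosets, these statements chain together to give that a perfect refiner for $G$ exists if and only if perfect refiners exist for one, and hence all, cosets of $G$ in $\SOm$.
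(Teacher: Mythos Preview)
Your proof is correct and follows essentially the same approach as the paper: both invoke Lemma~\ref{lem-refiner-group-coset} to obtain $f_R = f_L^{x}$, and both reduce the perfectness equivalence to the same transporter computation $Gx \cap \Iso{S}{T} = (G \cap \Iso{S}{T^{x^{-1}}})\cdot x$ combined with $T \mathop{\Vert} f_L^{x}(T) = (T^{x^{-1}} \mathop{\Vert} f_L(T^{x^{-1}}))^{x}$. The only cosmetic difference is that the paper phrases the core step symmetrically (showing $(f_L,f_L^{x})$ perfect for $Gx$ iff $(f_L,f_L^{y})$ perfect for $Gy$, then taking $y=1$), whereas you go directly to the $G$-versus-$Gx$ comparison.
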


\begin{proof}
  We prove that for all \(x,y\in\SOm\), \((f_L,f_{L}^{x})\) is a perfect refiner for \(Gx\) if and only if \((f_L,f_{L}^{y})\) is a perfect refiner for \(Gy\). The lemma follows from this by choosing \(y\coloneqq 1_{G}\) and by Lemma~\ref{lem-refiner-group-coset}.

  Let \(S,T \in \stacks\) with \(|S| = |T|\). Suppose that \((f_L,f_{L}^{x})\) is a perfect refiner for \(Gx\).
  Note that \(G y \cap \Iso{S}{T} = (G x \cap \Iso{S}{T^{y^{-1} x}}) \cdot x^{-1} y\).
  Then by assumption:
  \begin{align*}
    Gx \cap \Iso{S}{T^{y^{-1} x}}
      & = \Iso{\refine{f_L}{S}}{\refine{f_{L}^{x}}{T^{y^{-1} x}}} \\
      & = \Iso{\refine{f_L}{S}}{T^{y^{-1} x} \Vert{f_{L}^{y}}(T)^{y^{-1} x}} \\
      & = \Iso{\refine{f_L}{S}}{(\refine{f_{L}^{y}}{T})^{y^{-1} x}} \\
      & = \Iso{\refine{f_L}{S}}{\refine{f_{L}^{y}}{T}} \cdot y^{-1} x.
  \end{align*}
  Hence \((f_L,f_L^{y})\) is a perfect refiner for \(Gy\) by~\eqref{perfect-condition}.
  The converse implication follows by symmetry.
\end{proof}

The following lemma shows that combining perfect refiners with the \(\Vert\) operation of Notation~\ref{notation-refiner-combine} preserves perfectness.
This implies that the collection of groups and cosets which have perfect refiners is closed under intersection.

\begin{lemma}\label{lem-perfect-refiner-intersection}
  Let \((f, \sigma)\) and \((g, \tau)\) be perfect refiners for the sets \(U, V \subseteq \SOm\), respectively.
  Then \((f \Vert g, \sigma \Vert \tau)\) is a perfect refiner for the set \(U \cap V\).
\end{lemma}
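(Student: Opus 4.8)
The plan is to first observe that \((f \Vert g, \sigma \Vert \tau)\) is at least a refiner for \(U \cap V\), and then to upgrade this to perfectness by verifying condition~\eqref{perfect-condition} directly. For the first part, note that a perfect refiner is in particular a refiner (Lemma~\ref{lem-perfect-is-refiner}), so \((f,\sigma)\) and \((g,\tau)\) are refiners for \(U\) and \(V\); hence Lemma~\ref{lem-refiner-intersection} immediately yields that \((f \Vert g, \sigma \Vert \tau)\) is a refiner for \(U \cap V\). This already gives one inclusion of~\eqref{perfect-condition}: for \(S,T \in \stacks\) with \(|S| = |T|\), the refiner property~\eqref{refiner-condition} combined with \((U \cap V) \cap \Iso{S}{T} \subseteq \Iso{S}{T}\) and the elementary prefix-splitting identity (stated below) shows that \((U \cap V) \cap \Iso{S}{T}\) is contained in \(\Iso{\refine{f \Vert g}{S}}{\refine{\sigma \Vert \tau}{T}}\).

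The heart of the argument is the reverse inclusion, which I would obtain by computing the right-hand transporter set of~\eqref{perfect-condition} explicitly. Fix \(S,T \in \stacks\) with \(|S|=|T|\). Unwinding Notation~\ref{notation-refiner-combine}, the refined stacks are \(\refine{f \Vert g}{S} = S \Vert f(S) \Vert g(S)\) and \(\refine{\sigma \Vert \tau}{T} = T \Vert \sigma(T) \Vert \tau(T)\). The key step is to split the transporter set across the concatenations using the elementary fact that \(\Iso{A \Vert B}{A' \Vert B'} = \Iso{A}{A'} \cap \Iso{B}{B'}\) whenever \(|A| = |A'|\). Applying this, I would rewrite
\begin{align*}
  \Iso{S \Vert f(S) \Vert g(S)}{T \Vert \sigma(T) \Vert \tau(T)}
    &= \Iso{S \Vert f(S)}{T \Vert \sigma(T)} \cap \Iso{S \Vert g(S)}{T \Vert \tau(T)} \\
    &= \bigl(U \cap \Iso{S}{T}\bigr) \cap \bigl(V \cap \Iso{S}{T}\bigr)
     = (U \cap V) \cap \Iso{S}{T},
\end{align*}
where the second equality is exactly condition~\eqref{perfect-condition} applied to the perfect refiners \((f,\sigma)\) for \(U\) and \((g,\tau)\) for \(V\). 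This yields the desired equality, and hence perfectness.

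The main obstacle, and the step that needs genuine care, is the first equality in the display: the clean splitting of the transporter set into the separate ``\(f\)-part'' and ``\(g\)-part'' transporters requires that the refined blocks line up in length, that is, \(|f(S)| = |\sigma(T)|\) and \(|g(S)| = |\tau(T)|\). I would handle this by a case distinction on whether \(U \cap \Iso{S}{T}\) is empty. When it is nonempty, these length equalities come for free: condition~\eqref{perfect-condition} for \((f,\sigma)\) makes \(\Iso{S \Vert f(S)}{T \Vert \sigma(T)}\) nonempty, forcing \(|S \Vert f(S)| = |T \Vert \sigma(T)|\) and hence \(|f(S)| = |\sigma(T)|\) since \(|S| = |T|\), and the analogous argument via \((g,\tau)\) controls the other block, so the split is justified and the computation above goes through verbatim. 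When \(U \cap \Iso{S}{T}\) is empty, the left-hand side of~\eqref{perfect-condition} for the combined refiner is clearly empty, and one must show the right-hand transporter set is empty as well; it is precisely here that care is required, since one must rule out the possibility that the refined blocks have mismatched lengths while their concatenations nevertheless agree in total length. I expect this degenerate case to be the most delicate point, and establishing the appropriate length-compatibility of the refined blocks is the step I would need to pin down to complete the proof.
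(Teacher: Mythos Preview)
Your approach matches the paper's: it too handles the main case \(|f(S)|=|\sigma(T)|\) via essentially your displayed chain of equalities, and dismisses the case \(|f(S)|\neq|\sigma(T)|\) with a one-line appeal to ``an argument similar to'' the proof of Lemma~\ref{lem-refiner-intersection}. Your suspicion about the degenerate case is exactly right, and in fact the gap cannot be closed: the lemma as stated is false. Take \(\mathcal{O}=\Omega=\{1,2\}\) and let \(f,\sigma,g,\tau\) be the constant functions on \(\stacks\) with images \([1]\), \([1,1]\), \([1,1]\), \([1]\) respectively. Each of \((f,\sigma)\) and \((g,\tau)\) is a perfect refiner for \(\varnothing\), since for \(|S|=|T|\) the refined stacks always have unequal lengths and hence empty transporter. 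But \((f\Vert g)(S)=(\sigma\Vert\tau)(T)=[1,1,1]\), so with \(S=T=[\,]\) one gets \(\Iso{\refine{f\Vert g}{S}}{\refine{\sigma\Vert\tau}{T}}=\Auto{[1,1,1]}=\{\id_{\Omega}\}\neq\varnothing = (\varnothing\cap\varnothing)\cap\SOm\). The ``similar argument'' only yields one inclusion of~\eqref{perfect-condition} when \(|f(S)|\neq|\sigma(T)|\), and this is precisely where both your attempt and the paper's proof break.

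A small correction to your non-degenerate case as well: deducing \(|g(S)|=|\tau(T)|\) requires \(V\cap\Iso{S}{T}\neq\varnothing\), not \(U\cap\Iso{S}{T}\neq\varnothing\), so the clean split is guaranteed only when \emph{both} intersections are nonempty. If exactly one is nonempty the conclusion still holds (one block length matches, forcing either the other to match or the total lengths to differ), so the only truly bad case is when both \(U\cap\Iso{S}{T}\) and \(V\cap\Iso{S}{T}\) are empty --- and there the statement itself fails. The lemma becomes true, and your proof complete, under the mild extra hypothesis that \(|f(S)|=|\sigma(T)|\) and \(|g(S)|=|\tau(T)|\) whenever \(|S|=|T|\); this holds in particular for the constant-function refiners of Lemma~\ref{lem-refiner-constant}, which is the only form in which this lemma is invoked later in the paper.
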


\begin{proof}
  We show that condition~\eqref{perfect-condition} from Lemma~\ref{lem-perfect-is-refiner} holds,
  for the set \(U \cap V\) and the pair of functions \((f \Vert g, \sigma \Vert \tau)\).
  Let \(S,T \in \stacks\) with \(|S| = |T|\).
  By an argument similar to that used in the proof of Lemma~\ref{lem-refiner-intersection}, if \(|f(S)| \neq |\sigma(T)|\), then the condition holds, so suppose otherwise. Then
  \begin{align*}
    (U \cap V) \cap \Iso{S}{T}
      & = (U \cap \Iso{S}{T}) \cap (V \cap \Iso{S}{T}) \\
      & = \Iso{S \Vert f(S)}{T \Vert \sigma(T)} \cap \Iso{S \Vert g(S)}{T \Vert \tau(T)} \\
      & = \Iso{S}{T} \cap \Iso{f(S)}{\sigma(T)} \cap \Iso{g(S)}{\tau(T)} \\
      & = \Iso{S \cap (f \Vert g)(S)}{T \cap (\sigma \Vert \tau)(T)}.
      \qedhere
  \end{align*}
\end{proof}

\section{Perfect refiners for stabilisers and transporter sets}\label{sec-perfect-stab-transp}

Many computations that are commonly performed with backtrack search can be formulated as stabiliser or transporter problems.
This includes computing normalisers, determining subgroup conjugacy, solving graph isomorphism, and finding sets that are phrased in such terms, like set stabilisers.

Therefore, in order to most successfully solve such problems with backtrack search organised around stacks in \(\mathcal{O}\), we require a way of constructing refiners for the appropriate stabilisers and transporter sets.
These refiners should be cheap to compute, and ideally, they should be perfect if possible.

Lemma~\ref{lem-refiner-constant} constructively shows that this task can be reduced to translating the given stabiliser or transporter problem into one concerning stacks in \(\mathcal{O}\).
For a problem that is already given in terms of \(\stacks\), or at least in terms of \(\mathcal{O}\), it follows that little to no translation is needed; this is stated explicitly in the following immediate corollary to Lemma~\ref{lem-refiner-constant}.

\begin{cor}\label{cor-perfect-O-stacksO}
  For each \(x \in \mathcal{O}\) and \(S \in \stacks\), let \([x]\) be the stack with unique entry \(x\), and let \(f_{S}\) be the constant function in \(\stacks^{\stacks}\) with image \(S\).
  Let \(x,y \in \mathcal{O}\) and \(S,T \in \stacks\).
  Then \((f_{[x],}f_{[y]})\) is a perfect refiner for \(\Iso{x}{y}\), and
  \((f_S,f_T)\) is a perfect refiner for \(\Iso{S}{T}\).
\end{cor}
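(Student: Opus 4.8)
The plan is to derive both statements directly from the ``moreover'' clause of Lemma~\ref{lem-refiner-constant}, which asserts that whenever \(U = \Iso{A}{B}\) for some \(A, B \in \stacks\), the pair of constant functions with images \(A\) and \(B\) is a perfect refiner for \(U\). Thus in each case it suffices to exhibit stacks \(A\) and \(B\) whose transporter set in \(\SOm\) is \emph{exactly} the set that we wish to refine for; the hypothesis \(U \subseteq \Iso{A}{B}\) and the equality case then both hold at once.

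The second assertion is essentially immediate. Taking \(A \coloneqq S\), \(B \coloneqq T\), and \(U \coloneqq \Iso{S}{T}\), the equality \(U = \Iso{A}{B}\) holds trivially, so Lemma~\ref{lem-refiner-constant} applies and gives that \((f_S, f_T)\) is a perfect refiner for \(\Iso{S}{T}\). For the first assertion, I would take \(A \coloneqq [x]\), \(B \coloneqq [y]\), and \(U \coloneqq \Iso{x}{y}\). The one point that must be checked is that \(\Iso{[x]}{[y]} = \Iso{x}{y}\), so that \(U = \Iso{A}{B}\) and the lemma again applies. This follows from the definition of the induced action of \(\SOm\) on lists: since \([x]^{g} = [x^{g}]\) for every \(g \in \SOm\), we have \([x]^{g} = [y]\) if and only if \(x^{g} = y\), so the two transporter sets coincide. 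With this identification, Lemma~\ref{lem-refiner-constant} yields that \((f_{[x]}, f_{[y]})\) is a perfect refiner for \(\Iso{x}{y}\).

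There is no genuine obstacle here, as the result is flagged as an immediate corollary: the only substantive observation is the length-one identity \(\Iso{[x]}{[y]} = \Iso{x}{y}\), which reduces a transporter problem phrased on \(\mathcal{O}\) to one phrased on \(\stacks\). This is exactly the (trivial) translation step that Lemma~\ref{lem-refiner-constant} was designed to exploit, and the choice of constant functions reflects the earlier remark that a perfect refiner need only be applied once, at the root of the search.
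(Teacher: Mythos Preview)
Your proposal is correct and follows exactly the approach intended by the paper, which presents the result as an immediate corollary of Lemma~\ref{lem-refiner-constant} without further proof. The only point requiring comment is the identity \(\Iso{[x]}{[y]} = \Iso{x}{y}\), and you supply it cleanly from the definition of the induced action on lists.
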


For other transporter problems, more work is required in order to apply Lemma~\ref{lem-refiner-constant}, i.e.\ to construct stacks in \(\mathcal{O}\) whose own transporter set closely contains (and ideally equals) the given one. The analogous statement holds for stabiliser problems.
While this translation can be done on an ad-hoc basis, it is desirable to instead have a systematic method that works for a whole class of objects that we wish to stabilise and to compute transporter sets of.
To that end, we present Theorem~\ref{thm-perfect-stabilisers-and-transporters}.

\begin{defn}\label{defn-invariant}
  Let \(X\) and \(Y\) be sets on which \(\SOm\) acts, and let \(f : X \to Y\) be a function. Then \(f\) is called \emph{\(\SOm\)-invariant} if \(f(x^{g}) = {f(x)}^{g}\) for all \(x \in X\) and \(g \in G\).
\end{defn}

\begin{thm}\label{thm-perfect-stabilisers-and-transporters}
  Let \(\Sigma\) and \(\Theta\) be sets on which \(\SOm\) acts,
  let \(\pi : \Sigma \to \Theta\) be a
  \(\SOm\)-invariant function,
  and let \(x, y \in \Sigma\).
  Then the following statements hold:
  \begin{enumerate}[label=\emph{(\roman*)},ref=\textrm{(\roman*)}]
    \item\label{item-transp-refiner}
    \(\Iso{x}{y} \subseteq \Iso{\pi(x)}{\pi(y)}\).
    In particular, \(\Auto{x} \leq \Auto{\pi(x)}\),

    and any refiner for \(\Iso{\pi(x)}{\pi(y)}\) is a refiner for \(\Iso{x}{y}\).

    \item\label{item-transp-perfect}
    If \(\pi\) is injective, then \(\Iso{x}{y} = \Iso{\pi(x)}{\pi(y)}\);
    in particular, \(\Auto{x} = \Auto{\pi(x)}\),

    and any perfect refiner for \(\Iso{\pi(x)}{\pi(y)}\) is a perfect refiner for \(\Iso{x}{y}\).
  \end{enumerate}
\end{thm}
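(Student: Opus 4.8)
The plan is to prove both inclusions directly from the defining property of \(\SOm\)-invariance, namely \(\pi(x^{g}) = \pi(x)^{g}\) for all \(g \in \SOm\), and then to read off the consequences for stabilisers and for refiners in each case.

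For part~\ref{item-transp-refiner}, I would take an arbitrary \(g \in \Iso{x}{y}\), so that \(x^{g} = y\) by definition of the transporter set, and compute \(\pi(x)^{g} = \pi(x^{g}) = \pi(y)\) using \(\SOm\)-invariance. This exhibits \(g\) as an element of \(\Iso{\pi(x)}{\pi(y)}\) and gives the claimed inclusion. The stabiliser statement \(\Auto{x} \leq \Auto{\pi(x)}\) is then just the special case \(y \coloneqq x\), via \(\Auto{x} = \Iso{x}{x}\). For the refiner consequence, I would observe that the defining inclusion~\eqref{refiner-condition} is monotone in the target set: if \((f_L, f_R)\) is a refiner for \(\Iso{\pi(x)}{\pi(y)}\), then for all \(S,T \in \stacks\) we have \(\Iso{x}{y} \cap \Iso{S}{T} \subseteq \Iso{\pi(x)}{\pi(y)} \cap \Iso{S}{T} \subseteq \Iso{f_L(S)}{f_R(T)}\), so \((f_L, f_R)\) is also a refiner for the smaller set \(\Iso{x}{y}\).

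For part~\ref{item-transp-perfect}, the inclusion \(\subseteq\) is already supplied by part~\ref{item-transp-refiner}, so only the reverse inclusion uses injectivity. I would take \(g \in \Iso{\pi(x)}{\pi(y)}\), so \(\pi(x)^{g} = \pi(y)\), rewrite the left-hand side via \(\SOm\)-invariance as \(\pi(x^{g})\), and conclude \(\pi(x^{g}) = \pi(y)\); injectivity of \(\pi\) then forces \(x^{g} = y\), that is, \(g \in \Iso{x}{y}\). The equality of stabilisers is again the case \(y \coloneqq x\). Finally, since the two transporter sets are now literally equal, perfectness transfers immediately: condition~\eqref{perfect-condition} refers only to the set \(U\), so a perfect refiner for \(\Iso{\pi(x)}{\pi(y)}\) satisfies exactly the same equation with \(\Iso{x}{y}\) substituted for \(U\).

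I do not expect a substantial obstacle here; the content is a pair of short computations with \(\SOm\)-invariance. The only points requiring care are bookkeeping ones: getting the direction of the invariance identity right (applying \(\pi\) before versus after the action), and distinguishing the two refiner consequences. The ordinary refiner claim in~\ref{item-transp-refiner} needs only the subset relation together with the monotonicity of~\eqref{refiner-condition}, whereas the perfect refiner claim in~\ref{item-transp-perfect} genuinely requires set equality. This is precisely why injectivity is hypothesised in~\ref{item-transp-perfect} but not in~\ref{item-transp-refiner}, and I would make that contrast explicit in the write-up.
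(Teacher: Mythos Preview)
Your argument is correct and matches the paper's own proof essentially line for line: the same two direct computations with \(\SOm\)-invariance, followed by injectivity for the reverse inclusion. If anything you are slightly more explicit than the paper, which compresses the refiner and perfect-refiner consequences into ``Therefore~\ref{item-transp-refiner} holds'' and ``Therefore~\ref{item-transp-perfect} holds'' without spelling out the monotonicity and set-equality observations that you make.
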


\begin{proof}
  If \(g \in \Iso{x}{y}\), then \({\pi(x)}^{g} = \pi(x^{g}) = \pi(y)\),
  i.e.\ \(g \in \Iso{\pi(x)}{\pi(y)}\).
  Therefore~\ref{item-transp-refiner} holds.
  Suppose that \(\pi\) is injective.
  If \(g \in \Iso{\pi(x)}{\pi(y)}\), then \({\pi(x)}^{g} = \pi(x^{g})\) and \({\pi(x)}^{g} = \pi(y)\), and so the injectivity of \(\pi\) implies that \(x^{g} = y\).
  Therefore~\ref{item-transp-perfect} holds.
\end{proof}

Theorem~\ref{thm-perfect-stabilisers-and-transporters} suggests a strategy for systematically constructing refiners for the stabilisers and transporter sets of objects in a set on which \(\SOm\) acts, which is the foundation of our approach in Section~\ref{sec-examples-perfect}.
More precisely, given such a set \(\Sigma\), we identify another set \(\Theta\) on which \(\SOm\) acts, and for which we have identified refiners for all stabilisers and transporter sets.
We then define an injective \(\SOm\)-invariant function \(\pi : \Sigma \to \Theta\).
The desired refiners are then inherited via \(\pi\) as described in Theorem~\ref{thm-perfect-stabilisers-and-transporters}, with Corollary~\ref{cor-perfect-O-stacksO} providing the base of this recursive procedure.

\begin{remark}\label{rmk-the-one-chris-wanted}
  Let \(\mathcal{O}\) and \(\mathcal{O}'\) be sets on which \(\SOm\) acts, let \(\pi : \stacks \to \Stacks{\mathcal{O}'}\) be an injective \(\SOm\)-invariant function, and let \(U \subseteq \SOm\).
  Then Lemmas~\ref{lem-refiner-constant} and~\ref{lem-when-perfect-refiners-exist} and Theorem~\ref{thm-perfect-stabilisers-and-transporters}\ref{item-transp-perfect}
  together constructively show that if \(U\) has a perfect refiner in backtrack search organised around \(\stacks\), then \(U\) also has a perfect refiner in backtrack search organised around \(\Stacks{\mathcal{O}'}\).
\end{remark}

In Section~\ref{sec-examples-perfect}, we also make frequent use of the following lemma, which shows that the \(\Vert\) operation can be used to construct refiners for the stabilisers and transporter sets of lists.
This means that lists do need not to be considered separately from the objects that they contain.

\begin{lemma}\label{lem-transporter-list-of}
  Let \(n \in \N\) with \(n \geq 2\).
  For each \(i \in \{1,\ldots,n\}\), let \(x_i\) and \(y_i\) be objects from a set on which \(\SOm\) acts, and let \((f_{L,i}, f_{R,i})\) be a refiner for \(\Iso{x_i}{y_i}\).
  Then \((f_{L,1} \Vert \cdots \Vert f_{L,n},\ f_{R,1} \Vert \cdots \Vert f_{R,n})\) is a refiner for \(\Iso{[x_1,\ldots,x_n]}{[y_1,\ldots,y_n]}\).
  Moreover, if each refiner \((f_{L,i}, f_{R,i})\) is perfect, then the resulting refiner is perfect, too.
\end{lemma}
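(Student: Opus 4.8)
The plan is to reduce this to the combination lemmas already proved, via the elementary observation that the transporter set of a pair of lists is exactly the intersection of the transporter sets of their corresponding entries. Concretely, I would first establish the identity
\[
  \Iso{[x_1,\ldots,x_n]}{[y_1,\ldots,y_n]} = \bigcap_{i=1}^{n} \Iso{x_i}{y_i}.
\]
This follows directly from the definition of the induced action on lists: a permutation \(g \in \SOm\) satisfies \([x_1,\ldots,x_n]^{g} = [y_1,\ldots,y_n]\) if and only if \([x_1^{g},\ldots,x_n^{g}] = [y_1,\ldots,y_n]\), and since two lists of equal length coincide exactly when they agree in every position, this holds if and only if \(x_i^{g} = y_i\) for every \(i \in \{1,\ldots,n\}\), i.e.\ if and only if \(g \in \Iso{x_i}{y_i}\) for every \(i\).

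With this identity in hand, the refiner claim is immediate from Lemma~\ref{lem-refiner-intersection}. Since each \((f_{L,i}, f_{R,i})\) is a refiner for \(\Iso{x_i}{y_i}\), and since the \(\Vert\) operation of Notation~\ref{notation-refiner-combine} is associative, the remark following Lemma~\ref{lem-refiner-intersection} lets me repeatedly combine these \(n\) refiners to obtain a refiner \((f_{L,1}\Vert\cdots\Vert f_{L,n},\ f_{R,1}\Vert\cdots\Vert f_{R,n})\) for \(\bigcap_{i=1}^{n}\Iso{x_i}{y_i}\). By the identity above, this intersection is exactly \(\Iso{[x_1,\ldots,x_n]}{[y_1,\ldots,y_n]}\), which proves the first assertion. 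For the perfect case I would argue identically, but invoke Lemma~\ref{lem-perfect-refiner-intersection} in place of Lemma~\ref{lem-refiner-intersection}, since that lemma shows that \(\Vert\) preserves perfectness under intersection; combining the \(n\) perfect refiners then yields a perfect refiner for the same intersection.

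Formally the repeated combination is an induction on \(n\), with base case \(n = 2\) being precisely Lemma~\ref{lem-refiner-intersection} (respectively Lemma~\ref{lem-perfect-refiner-intersection}), and the inductive step grouping the first \(n-1\) entries together using associativity of \(\Vert\). The only point requiring a little care is that the target set of the combined refiner genuinely \emph{equals} the transporter set of the lists, rather than merely being contained in it; this is exactly where the displayed identity---and, for perfectness, the equality (rather than containment) supplied by Lemma~\ref{lem-perfect-refiner-intersection}---does the work. I do not anticipate a serious obstacle here: the mathematical content is entirely the list-to-intersection translation, and everything else is a bookkeeping application of results already in hand.
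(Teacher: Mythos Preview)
Your proposal is correct and follows exactly the paper's own approach: the paper's proof simply states the identity \(\Iso{[x_1,\ldots,x_n]}{[y_1,\ldots,y_n]} = \bigcap_{i=1}^{n}\Iso{x_i}{y_i}\) and then invokes Lemmas~\ref{lem-refiner-intersection} and~\ref{lem-perfect-refiner-intersection}. Your write-up merely expands the details of this argument (the verification of the identity and the inductive application of the combination lemmas), which is entirely in line with what the paper leaves implicit.
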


\begin{proof}
  Since \(\Iso{[x_1,\ldots,x_n]}{[y_1,\ldots,y_n]} = \bigcap_{i = 1}^{n}
  \Iso{x_i}{y_i}\), the result follows from
  Lemmas~\ref{lem-refiner-intersection}
  and~\ref{lem-perfect-refiner-intersection}.
\end{proof}

\begin{cor}\label{cor-perfect-list-of}
  Let \(\Sigma\) be a set on which \(\SOm\) acts.
  Then there exist perfect refiners for all stabilisers and for all transporter sets of objects in \(\Sigma\) if and only if there exist perfect refiners for all stabilisers and transporter sets of finite lists in \(\Sigma\).
\end{cor}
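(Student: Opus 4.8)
The plan is to prove the two implications separately, leaning on Lemma~\ref{lem-transporter-list-of} for the substantive direction and on the identification of each object with a length-one list for the trivial direction. Throughout I note that stabilisers are transporter sets of an object (or list) to itself, so it suffices to treat transporter sets in both directions. For the reverse implication, suppose perfect refiners exist for all transporter sets of finite lists in \(\Sigma\). For objects \(x,y \in \Sigma\) one checks that a permutation \(g\) satisfies \([x]^{g} = [y]\) exactly when \(x^{g} = y\), so \(\Iso{[x]}{[y]} = \Iso{x}{y}\); hence any perfect refiner for the list transporter set \(\Iso{[x]}{[y]}\) is already a perfect refiner for \(\Iso{x}{y}\). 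This direction therefore requires no real work.

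For the forward implication, assume there exist perfect refiners for all transporter sets of objects in \(\Sigma\), and let \(L \coloneqq [x_1,\ldots,x_n]\) and \(K \coloneqq [y_1,\ldots,y_m]\) be arbitrary finite lists in \(\Sigma\); I must produce a perfect refiner for \(\Iso{L}{K}\). The principal case is \(n = m \geq 2\): by hypothesis, for each \(i\) there is a perfect refiner \((f_{L,i},f_{R,i})\) for \(\Iso{x_i}{y_i}\), and I would combine these with the \(\Vert\) operation and apply Lemma~\ref{lem-transporter-list-of}, whose perfectness clause delivers a perfect refiner for \(\Iso{L}{K}\) directly. The case \(n = m = 1\) is exactly the hypothesis via the identification above, while \(n = m = 0\) gives \(\Iso{L}{K} = \Iso{[\,]}{[\,]} = \SOm\), for which the pair of constant empty-stack functions is a perfect refiner by Lemma~\ref{lem-refiner-constant}.

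The only case needing a moment's care, and the place I expect scrutiny, is lists of unequal length \(n \neq m\): since permutations preserve list length, \(\Iso{L}{K} = \varnothing\), and I must exhibit a perfect refiner for \(\varnothing\). By Lemma~\ref{lem-when-perfect-refiners-exist} it suffices to realise \(\varnothing\) as a transporter set \(\Iso{A}{B}\) of two stacks in \(\stacks\); choosing any \(x \in \mathcal{O}\) and the stacks \(A \coloneqq [x]\) and \(B \coloneqq [x,x]\) of different lengths achieves this, and Lemma~\ref{lem-refiner-constant} then supplies the refiner. (If \(\mathcal{O} = \varnothing\) this construction is unavailable, but then \(\stacks = \{[\,]\}\) and \(\SOm\) is the only realisable transporter set, so the hypotheses force all relevant sets to equal \(\SOm\) and the claim holds degenerately.) Since the genuine mathematical content is a single application of Lemma~\ref{lem-transporter-list-of}, the real effort is not in any one deduction but in cleanly covering these boundary cases—empty lists, unequal lengths, and the realisation of the empty set—so that the equivalence holds without exception; assembling them completes the corollary.
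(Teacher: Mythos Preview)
Your proof is correct and follows the paper's approach: the paper states the corollary without an explicit proof, treating it as immediate from Lemma~\ref{lem-transporter-list-of}, and your argument is a faithful elaboration that also handles the boundary cases (empty lists, length-one lists, unequal lengths) explicitly. One small caveat: your parenthetical about \(\mathcal{O} = \varnothing\) is not quite right---if \(\Sigma\) consists of a single \(\SOm\)-fixed point, the hypothesis holds trivially, yet \(\Iso{[x]}{[x,x]} = \varnothing\) has no perfect refiner when \(\stacks = \{[\,]\}\), so the forward implication actually fails rather than holding degenerately; but this pathological case lies outside the paper's scope, where \(\mathcal{O}\) is always nonempty.
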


\section{Extended graph backtracking, with extra vertices}\label{sec-extended}

With graph backtracking,
a search in \(\SOm\) is organised around stacks of labelled digraphs
on the vertex set \(\Omega\).
Although this technique gives significantly smaller search sizes in some
cases~\cite[Section~9]{GB_published},
and enables some important perfect refiners
(Section~\ref{sec-perfect-graph-BT}),
the requirement that the digraphs have vertex set \(\Omega\) limits the
possibilities for perfect refiners.
This is shown explicitly in Corollary~\ref{cor-graph-stab-2closed}.

In this section, we introduce an extension to graph backtracking, which accommodates digraphs that are allowed to have additional vertices.
The technique requires only a small adjustment to the theory of graph
backtracking, and has already been implemented in \textsc{Vole}~\cite{vole}.
In fact the necessary concepts from \cite{GB_published} extend naturally, and re-stating and re-proving the corresponding definitions and results do not give any new insights, which is why we do not include these technical details in this article.
Instead, we discuss the extended graphs themselves, and their stacks, in much detail, because this is necessary for formulating and examining refiners in this setting.

We require some additional definitions and notation for the rest of this article.

We fix \(\Lambda\) as an infinite well-ordered set containing \(\Omega\),
where \(\alpha < \beta\) for all \(\alpha \in \Omega\) and \(\beta \in \Lambda \setminus \Omega\).
In practice, and in the forthcoming examples, we use \(\Omega \coloneqq \{1,\ldots,n\}\) for some \(n \in \N\), and \(\Lambda \coloneqq \N\).

For any subset \(V\) of \(\Lambda\) that contains \(\Omega\),
we regard \(\SOm\) and \(\Sym{V \setminus \Omega}\) as subgroups of \(\Sym{V}\),
by identifying each permutation of \(V\) that fixes \(V \setminus \Omega\) pointwise with its restriction to \(\Omega\),
and identifying each permutation of \(V\) that fixes \(\Omega\) pointwise with its restriction to \(V \setminus \Omega\).
In this way, \(\SOm\) and \(\Sym{V \setminus \Omega}\) inherit from \(\SOm\) an action on the set of all labelled digraphs on \(V\).
Furthermore, since we regard \(\SOm\) and \(\Sym{V \setminus \Omega}\) as subgroups of 
\(\Sym{V}\), they 
commute, and this means that 
\(\SOm\) permutes the set of orbits of \(\Sym{V \setminus \Omega}\) on labelled digraphs on \(V\).
For a labelled digraph \(\Gamma\) on \(V\), we use the notation \(\ov{\Gamma}\) for the orbit of \(\Gamma\) under \(\Sym{V \setminus \Omega}\).
The action of \(\SOm\) is then given by \({(\ov{\Gamma})}^{g} \coloneqq \ov{\Gamma^{g}}\) for all labelled digraphs \(\Gamma\) on \(V\) and all \(g \in \SOm\).

We define an \emph{extended graph} on \(\Omega\)
to be an orbit of \(\Sym{V \setminus \Omega}\) on the set of labelled digraphs on \(V\),
for some finite set \(V\) with \(\Omega \subseteq V \subseteq \Lambda\).
This will be illustrated in Example~\ref{ex-extended-graph}.
An \emph{extended graph stack} on \(\Omega\) is any list of extended graphs on \(\Omega\).
We remark that different entries in  an extended graph stack are allowed to be defined in relation to different subsets of \(\Lambda\).
\emph{Extended graph backtracking} in \(\SOm\) is then backtrack search organised around extended graph stacks on \(\Omega\).

The results of Sections~\ref{sec-refiners}--\ref{sec-perfect-stab-transp} hold in the setting of extended graph backtracking in \(\SOm\).

\begin{example}\label{ex-extended-graph}
  Let \(\Omega \coloneqq \{1,\ldots,4\}\) and \(V \coloneqq \{1,\ldots,6\}\).
  Depicted in Figure~\ref{fig-extended-graph} are labelled digraphs \(\Gamma\) and \(\Gamma^{(5\,6)}\) on \(V\),
  which comprise an orbit of \(\Sym{V \setminus \Omega}\) on the set of labelled digraphs on \(V\).
  Therefore \(\ov{\Gamma} = \{\Gamma, \Gamma^{(5\,6)}\}\) is an extended graph on \(\Omega\).
  Note that \(\Auto{\ov{\Gamma}} = \< (1\,4), (1\,2)(3\,4) \> \),
  which is the setwise stabiliser of \(\{\{1,4\},\{2,3\}\}\) in \(\SOm\).
  In Section~\ref{sec-perfect-set-of-sets} we will discuss a similar construction.
\end{example}

\begin{figure}[!ht]
  \small
  \centering
  \begin{tikzpicture}
    \tikzstyle{white}=[circle, draw=black,]
    \tikzstyle{black}=[circle, draw=white, fill=black!100, text=white]
    \node[white] (1) at (0, 1.8)    {1};
    \node[white] (2) at (1, 1.8)    {2};
    \node[white] (3) at (1, 0)      {3};
    \node[white] (4) at (0, 0)      {4};
    \node[black] (5) at (-0.5, 0.9) {5};
    \node[black] (6) at (1.5, 0.9)  {6};
    \node at (0.5, -0.5) {\(\Gamma\)};

    \arcSym{1}{4}
    \arcSym{2}{3}
    \arc{1}{5}
    \arc{4}{5}
    \arc{2}{6}
    \arc{3}{6}
  \end{tikzpicture}
  \qquad\qquad
  \begin{tikzpicture}
    \tikzstyle{white}=[circle, draw=black,]
    \tikzstyle{black}=[circle, draw=white, fill=black!100, text=white]

    \node[white] (1) at (0, 1.8) {1};
    \node[white] (2) at (1, 1.8) {2};
    \node[white] (3) at (1, 0) {3};
    \node[white] (4) at (0, 0) {4};
    \node[black] (5) at (-0.5, 0.9) {6};
    \node[black] (6) at (1.5, 0.9)  {5};
    \node at (0.5, -0.5) {\(\Gamma^{(5\,6)}\)};

    \arcSym{1}{4}
    \arcSym{2}{3}
    \arc{1}{5}
    \arc{4}{5}
    \arc{2}{6}
    \arc{3}{6}
  \end{tikzpicture}
  \caption{\label{fig-extended-graph}
  The digraphs \(\Gamma\) and \(\Gamma^{(5\,6)}\) from Example~\ref{ex-extended-graph}.
  }
\end{figure}
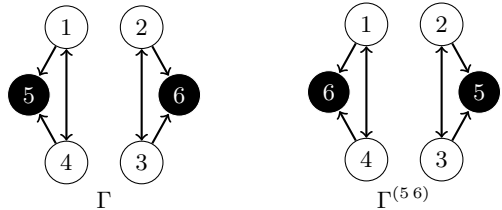

\begin{lemma}\label{lem-extended-stabilises-restriction}
  Let \(A\) and \(B\) be extended graphs on \(\Omega\), defined relative to subsets \(U\) and \(V\) of \(\Lambda\), respectively, and let \(\Gamma \in A\) and \(\Delta \in B\) be arbitrary.
  If \(U \neq V\), then \(\Iso{U}{V} = \varnothing\). Otherwise,
  \(\Iso{A}{B}\) is the restriction of
  \(T \coloneqq \set{x \in \Sym{V}}{x\ \text{preserves}\ \Omega\ \text{setwise, and}\ \Gamma^{x} = \Delta}\) to \(\Omega\).
\end{lemma}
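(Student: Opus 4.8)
The plan is to treat the two cases of the statement separately. For the case \(U \neq V\), I would exploit that the underlying vertex set is an \(\SOm\)-invariant of an extended graph: if \(A = \ov{\Gamma}\) with \(\Gamma\) a labelled digraph on \(U\), then \(A^{g} = \ov{\Gamma^{g}}\) is again an orbit of labelled digraphs on \(U\), since \(g \in \SOm \leq \Sym{U}\) fixes \(U \setminus \Omega\) pointwise and maps \(\Omega\) to \(\Omega\) setwise. Concretely, the map sending an extended graph to its vertex set is \(\SOm\)-invariant in the sense of Definition~\ref{defn-invariant}, because \(\SOm\) fixes every subset of \(\Lambda\) containing \(\Omega\) (for \(g \in \SOm\) and \(\Omega \subseteq W \subseteq \Lambda\) one has \(W^{g} = W\)). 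Hence Theorem~\ref{thm-perfect-stabilisers-and-transporters}\ref{item-transp-refiner} gives \(\Iso{A}{B} \subseteq \Iso{U}{V}\), and \(\Iso{U}{V} = \varnothing\) whenever \(U \neq V\). The substance of the lemma is the case \(U = V\), which I would establish by proving the two inclusions between \(\Iso{A}{B}\) and the restriction \(\set{x|_{\Omega}}{x \in T}\) of \(T\) to \(\Omega\); the argument will work for arbitrary representatives \(\Gamma \in A\) and \(\Delta \in B\), so it simultaneously shows that the right-hand side is independent of their choice.

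\textbf{Forward inclusion.} For \(\Iso{A}{B} \subseteq \set{x|_{\Omega}}{x \in T}\), let \(g \in \Iso{A}{B}\), so that \(\ov{\Gamma^{g}} = \ov{\Delta}\) by the definition of the induced \(\SOm\)-action. Unwinding the meaning of a \(\Sym{V \setminus \Omega}\)-orbit, there exists \(h \in \Sym{V \setminus \Omega}\) with \((\Gamma^{g})^{h} = \Gamma^{gh} = \Delta\). I would then verify that \(x \coloneqq gh\) witnesses membership in \(T\) and restricts to \(g\): since \(g\) permutes \(\Omega\) and fixes \(V \setminus \Omega\) pointwise, while \(h\) fixes \(\Omega\) pointwise and permutes \(V \setminus \Omega\), the product \(gh \in \Sym{V}\) preserves \(\Omega\) setwise and agrees with \(g\) on \(\Omega\). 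Together with \(\Gamma^{gh} = \Delta\) this gives \(gh \in T\) and \(g = (gh)|_{\Omega} \in \set{x|_{\Omega}}{x \in T}\).

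\textbf{Reverse inclusion.} For \(\set{x|_{\Omega}}{x \in T} \subseteq \Iso{A}{B}\), let \(x \in T\) and set \(g \coloneqq x|_{\Omega} \in \SOm\). The structural heart of the argument is that, because \(x\) preserves \(\Omega\) setwise, it also preserves \(V \setminus \Omega\) setwise, and therefore factors as a commuting product \(x = g\,h\) with \(g \in \SOm\) acting as \(x\) on \(\Omega\) and fixing \(V \setminus \Omega\) pointwise, and \(h \in \Sym{V \setminus \Omega}\) acting as \(x\) on \(V \setminus \Omega\) and fixing \(\Omega\) pointwise; under the identification of \(\SOm\) as a subgroup of \(\Sym{V}\), this \(g\) is exactly \(x|_{\Omega}\). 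Then \(\Delta = \Gamma^{x} = (\Gamma^{g})^{h}\) with \(h \in \Sym{V \setminus \Omega}\), so \(\Gamma^{g}\) and \(\Delta\) lie in the same \(\Sym{V \setminus \Omega}\)-orbit; that is, \(A^{g} = \ov{\Gamma^{g}} = \ov{\Delta} = B\), whence \(g \in \Iso{A}{B}\). Combining the two inclusions yields the claimed equality.

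\textbf{Main obstacle.} I expect the only real obstacle to be the bookkeeping surrounding the identification of \(\SOm\) and \(\Sym{V \setminus \Omega}\) as commuting subgroups of \(\Sym{V}\), and in particular the factorisation used in the reverse inclusion: an arbitrary \(\Omega\)-setwise-preserving \(x \in \Sym{V}\) is forced to be ``block diagonal'' with respect to the partition into \(\Omega\) and \(V \setminus \Omega\), which is precisely what permits splitting off its \(\SOm\)-part and absorbing the remainder into the \(\Sym{V \setminus \Omega}\)-orbit. Once this factorisation is in hand, everything else is a careful but routine unwinding of the definition of an extended graph as a \(\Sym{V \setminus \Omega}\)-orbit and of the induced action \(\ov{\Gamma}^{\,g} = \ov{\Gamma^{g}}\).
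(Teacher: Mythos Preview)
Your proposal is correct and follows essentially the same approach as the paper: both directions hinge on the factorisation of an \(\Omega\)-preserving element of \(\Sym{V}\) as a product of its \(\SOm\)-part and its \(\Sym{V\setminus\Omega}\)-part, and on the defining property \(\ov{\Gamma}^{g} = \ov{\Gamma^{g}}\) of the induced action. The paper's proof is terser and does not spell out the \(U \neq V\) case, which you handle more carefully, but the substantive argument is the same.
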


\begin{proof}
  Suppose that \(U = V\) and let \(g\) be the restriction to \(\Omega\) of some \(x \in T\).
  Then since \({g}^{-1} x \in \Sym{V \setminus \Omega}\), it follows that
  \(A^{g} = \ov{\Gamma}^{g} = \ov{\Gamma^{g}} = \ov{\Gamma^{g (g^{-1} x)}} = \ov{\Gamma^{x}} = \ov{\Delta} = B\).
  Conversely, if \(g \in \Iso{A}{B} = \Iso{\ov{\Gamma}}{\ov{\Delta}}\), then
  \( \Gamma^{g} = \Delta^{h}\) for some \(h \in \Sym{V \setminus \Omega}\).
  Thus \(g\) is the restriction of \(g h^{-1} \in T\) to \(\Omega\).
\end{proof}

Lemma~\ref{lem-extended-stabilises-restriction} implies that we do not need to enumerate whole orbits of labelled digraphs in order to compute stabilisers and transporter sets of extended graphs.
In practice, we construct extended graphs that consist of labelled digraphs where the labels used for vertices in \(\Omega\) are never used for vertices in \(\Lambda \setminus \Omega\).
This guarantees that any permutation that maps one such labelled digraph to another necessarily preserves \(\Omega\) as a set.
Therefore, in the notation of Lemma~\ref{lem-extended-stabilises-restriction}, the set \(T\) is simply the transporter set in \(\Sym{V}\) from \(\Gamma\) to \(\Delta\).

\section{The groups and cosets with perfect refiners}\label{sec-hierarchy}

By Lemmas~\ref{lem-when-perfect-refiners-exist} and~\ref{lem-perfect-refiner-group-coset}, the task of finding a perfect refiner for a subgroup of \(\SOm\), or for a coset of a subgroup,
is equivalent to finding a stack whose stabiliser in \(\SOm\) is the corresponding group.
In this section, and for each setting of backtracking, we classify the groups (and correspondingly the cosets of subgroups) for which there exists a perfect refiner.
This information is summarised in Table~\ref{table-perfect-groups}.

It follows from these classifications that, in a sense, partition backtracking inherits the perfect refiners of classical backtracking, that graph backtracking inherits the perfect refiners of partition backtracking, and that extended graph backtracking inherits the perfect refiners of graph backtracking.
This can be shown constructively: it is straightforward to map lists of points to lists of ordered partitions, and lists of ordered partitions to lists of labelled digraphs, in injective \(\SOm\)-invariant ways.
As mentioned in Remark~\ref{rmk-the-one-chris-wanted}, perfect refiners can then be translated via these maps.

\begin{table}[!ht]
  \centering
  \begin{tabular}{lll}
    Backtracking & The subgroups of \(\SOm\), and their cosets, with perfect refiners & Reference \\
    \hline
    Classical
      & \(\Sym{\Sigma}\) for any subset \(\Sigma \subseteq \Omega\).
      & ---
      \\
    Partition
      & \(\Sym{\Sigma_{1}} \times \cdots \times \Sym{\Sigma_{k}}\)
        for any partition \(\{\Sigma_{1}, \ldots, \Sigma_{k}\}\) of \(\Omega\).
      & ---
      \\
    Graph
      & Any 2-closed subgroup of \(\SOm\).
      & Cor.~\ref{cor-graph-stab-2closed}
      \\
    Extended graph
      &
      Any subgroup of \(\SOm\).
      & Cor.~\ref{lem-extended-perfect}
      \\
  \end{tabular}
  \caption{
    This table shows the subgroups of \(\SOm\), and the cosets of subgroups of \(\SOm\), for which there exists a perfect refiner, in various kinds of backtracking.
    For a subset \(\Sigma \subseteq \Omega\), we identify \(\Sym{\Sigma}\) with the pointwise stabiliser of \(\Omega \setminus \Sigma\) in \(\SOm\), and for a partition \(\{\Sigma_{1}, \ldots, \Sigma_{k}\}\) of \(\Omega\), we identify \(\Sym{\Sigma_{1}} \times \cdots \times \Sym{\Sigma_{k}}\) with the corresponding internal direct product in \(\SOm\).
  }\label{table-perfect-groups}
\end{table}

The classifications in Table~\ref{table-perfect-groups} for classical backtracking (stabilisers in \(\SOm\) of lists in \(\Omega\)) and partition backtracking (stabilisers in \(\SOm\) of lists of ordered partitions of \(\Omega\)) are trivial to verify.

For graph backtracking, we require the notion of the 2-closure of a permutation group.

\begin{defn}[\mbox{2-closure, 2-closed; cf.~\cite[Section~3.2]{DixonMortimer}}]
  The \emph{2-closure} of a group \(G \leq \SOm\) is the largest subgroup of \(\SOm\) with the same orbits on \(\Omega \times \Omega\) as \(G\); it is the pointwise stabiliser in \(G\) of the set of all orbital graphs of \(G\).
  A \emph{2-closed} group is one that is equal to its 2-closure.
\end{defn}

The 2-closure of any subgroup of \(\SOm\) that acts at least 2-transitively on \(\Omega\) is \(\SOm\).  Therefore no proper subgroup of \(\SOm\) acts at least 2-transitively on \(\Omega\) and is 2-closed.

\begin{lemma}\label{lem-graph-stab-2closed}
  Let \(G\) be a subgroup of \(\SOm\).
  Then \(G\) is the stabiliser of a labelled digraph stack on \(\Omega\) if and only if \(G\) is 2-closed.
\end{lemma}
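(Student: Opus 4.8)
The plan is to prove both implications by relating the colour classes of the digraphs in a stack to the orbits of the stabiliser on \(\Omega\) and on \(\Omega \times \Omega\). Throughout I would write \(S = [\Gamma_1, \ldots, \Gamma_k]\) for a labelled digraph stack, and recall (as holds for the stabiliser of any list) that \(\Auto{S} = \bigcap_{i=1}^{k} \Auto{\Gamma_i}\), the pointwise stabiliser of the entries.

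For the direction that the stabiliser of a stack is 2-closed, I would fix \(G \coloneqq \Auto{S}\) and write \(G^{(2)}\) for its 2-closure; since \(G \leq G^{(2)}\) always holds, it suffices to prove \(G^{(2)} \leq G\). Fix a digraph \(\Gamma_i\) in the stack. Because \(G \leq \Auto{\Gamma_i}\), each vertex-colour class of \(\Gamma_i\) is a \(G\)-invariant subset of \(\Omega\), hence a union of \(G\)-orbits on \(\Omega\), and each arc-colour class is a \(G\)-invariant subset of \(\Omega \times \Omega\), hence a union of \(G\)-orbits on \(\Omega \times \Omega\). By definition, any \(g \in G^{(2)}\) has the same orbits on \(\Omega \times \Omega\) as \(G\), and in particular (restricting to the diagonal) the same orbits on \(\Omega\); thus \(g\) fixes every such colour class setwise, so \(g \in \Auto{\Gamma_i}\). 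As this holds for every \(i\), we obtain \(g \in \bigcap_i \Auto{\Gamma_i} = G\), proving \(G^{(2)} \leq G\) and hence that \(G\) is 2-closed.

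For the converse, suppose \(G\) is 2-closed. Here I would appeal directly to the characterisation of the 2-closure as the pointwise stabiliser in \(\SOm\) of the set of all orbital graphs of \(G\): since \(G\) equals its 2-closure, \(G\) is exactly the stabiliser of the stack whose entries are the (finitely many) orbital graphs of \(G\), listed in any order. As each orbital graph is in particular a labelled digraph on \(\Omega\) — with the diagonal orbitals recording the \(G\)-orbits on \(\Omega\) via loops, so that vertex orbits are captured — this exhibits \(G\) as the stabiliser of a labelled digraph stack, as required. Alternatively, one may combine these into a single labelled digraph by colouring each arc \((\alpha,\beta)\) and each vertex \(\alpha\) by the \(G\)-orbit of \((\alpha,\beta)\) and of \(\alpha\), respectively; its automorphism group is again \(G^{(2)} = G\).

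I expect the main obstacle to be the bookkeeping in the forward direction: one must be careful to treat vertex labels and arc labels on an equal footing, observing that the \(G\)-orbits on \(\Omega\) are precisely the diagonal \(G\)-orbits on \(\Omega \times \Omega\), so that membership in \(G^{(2)}\) (defined through orbits on \(\Omega \times \Omega\)) genuinely forces every colour class — vertex or arc — to be preserved. The converse is essentially a restatement of the definition of 2-closure and should present no real difficulty.
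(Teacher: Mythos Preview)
Your proof is correct and follows essentially the same approach as the paper: both directions hinge on the observation that the vertex- and arc-label classes of a labelled digraph stabilised by \(G\) are unions of \(G\)-orbits on \(\Omega\) and on \(\Omega \times \Omega\), so that the 2-closure must also stabilise them. Your forward direction is in fact slightly more streamlined than the paper's, which first invokes an external lemma (\cite[Lemma~3.6]{GB_published}) to reduce the stack to a single labelled digraph before running the same orbit argument, whereas you argue directly for each entry \(\Gamma_i\) of the stack and take the intersection.
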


\begin{proof}
  \((\Leftarrow)\) Labelling all vertices and arcs of a digraph with a fixed label preserves its stabiliser. Thus, if \(S\) is a stack of labelled digraphs formed from all the orbital graphs of \(G\) in this way, then \(G = \Auto{S}\).

  \((\Rightarrow)\)
  By~\cite[Lemma~3.6]{GB_published}, we may assume that \(G = \Auto{\Gamma}\) for a labelled digraph \(\Gamma\) on \(\Omega\).
  Let \(A_1, \ldots, A_k \subseteq \Omega \times \Omega\) and \(B_1, \ldots, B_l \subseteq \Omega\) be the orbits of \(G\) on the sets of arcs and vertices of \(\Gamma\), respectively, for some \(k \in \N \cup \{0\}\) and \(l \in \N\).
  For each \(i \in \{1,\ldots,k\}\) and \(j \in \{1,\ldots,l\}\), we define digraphs \(\Delta_i \coloneqq (\Omega,A_i)\) and \(\Sigma_j \coloneqq (\Omega,\set{(b, b)}{b \in B_j})\).
  Each such digraph is an orbital graph of \(G\), and so its stabiliser contains \(G\). Furthermore, if a permutation \(g \in \SOm\) stabilises each of these digraphs
  pointwise, then by construction, \(g\) maps each vertex of \(\Gamma\) to a vertex with the same label, and it maps each arc of \(\Gamma\) to an arc with the same label, and so \(g \in \Auto{\Gamma}\).
  Thus we have proved that
  \[
    G
        \leq
    \left( \bigcap_{i = 1}^{k} \Auto{\Delta_i} \right)
      \cap
    \left( \bigcap_{j = 1}^{l} \Auto{\Sigma_j} \right)
        \leq
    \Auto{\Gamma}
        =
    G.
  \]
  Since \(G\) is the pointwise stabiliser of a subset of its orbital graphs, it is 2-closed.
\end{proof}

\begin{cor}\label{cor-graph-stab-2closed}
  In graph backtracking, there exists a perfect refiner for a group \(G \leq \SOm\) if and only if \(G\) is 2-closed.
\end{cor}

For extended graph backtracking, we require the following result.

\begin{prop}[\!\!\cite{Bouwer69,MathOverflow}]
  \label{prop-arbitrary-group-graph}
  Let \(G \leq \SOm\).
  There exists a finite set \(V\) containing \(\Omega\)
  and a labelled digraph \(\Gamma\) on \(V\),
  such that the stabiliser of \(\Gamma\) in \(\Sym{V}\) preserves \(\Omega\) setwise,
  and the restriction of this stabiliser to \(\Omega\) is \(G\).
\end{prop}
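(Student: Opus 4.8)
The plan is to realise $G$ as the automorphism group of a \emph{single relation of arity $n \coloneqq |\Omega|$} on $\Omega$, and then to encode that relation faithfully as a labelled digraph by introducing auxiliary vertices, using one position gadget per tuple of the relation. First I would fix an enumeration $\Omega = \{\omega_1, \ldots, \omega_n\}$, set $\tau \coloneqq (\omega_1, \ldots, \omega_n)$, and consider its orbit $R \coloneqq \tau^{G}$, a set of $n$-tuples each of which lists every point of $\Omega$ exactly once (so $|R| = |G|$, since the stabiliser of $\tau$ is trivial). The key elementary observation is that $R^{\sigma} = R$ holds for $\sigma \in \SOm$ if and only if $\sigma \in G$: since $\tau$ contains all of $\Omega$, any $\sigma$ with $\tau^{\sigma} \in R$ is determined by, and equal to, the group element carrying $\tau$ to $\tau^{\sigma}$; conversely $G$ clearly preserves $R$. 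Thus $G = \Auto{R}$ when $R$ is viewed as an $n$-ary relation, and it remains only to turn this relation into a digraph.

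Next I would build the digraph $\Gamma$ on the finite vertex set $V \coloneqq \Omega \cup \set{p_{t,i}}{t \in R,\ 1 \le i \le n}$. For each tuple $t = (t_1, \ldots, t_n) \in R$ I would lay down a directed path $p_{t,1} \to p_{t,2} \to \cdots \to p_{t,n}$ on its gadget vertices, together with one slot arc $p_{t,i} \to t_i$ from the $i$-th gadget vertex to the point occupying the $i$-th coordinate of $t$. I would give every vertex of $\Omega$ one label and every gadget vertex a second, distinct label. Because automorphisms preserve labels, $\Omega$ is stabilised setwise by $\aut(\Gamma)$, which is the first assertion of the proposition; moreover each $g \in G$ extends to an automorphism of $\Gamma$ via $\omega \mapsto \omega^{g}$ on $\Omega$ and $p_{t,i} \mapsto p_{t^{g},i}$ on the gadgets, since this manifestly preserves both the path arcs and the slot arcs. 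Hence the restriction of $\aut(\Gamma)$ to $\Omega$ contains $G$.

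For the reverse containment I would argue that every $\sigma \in \aut(\Gamma)$ restricts to an element of $G$ on $\Omega$. The gadget vertices receive no incoming arcs except along their paths, so each source $p_{t,1}$ is the unique in-degree-zero vertex of its gadget; since $\sigma$ preserves labels, in-degrees, and arc directions, it permutes the $|R|$ position gadgets among themselves and, following each directed path from its source, must send $p_{t,i}$ to $p_{t',i}$ for some $t' \in R$ and all $i$. Comparing the slot arc $p_{t,i} \to t_i$ with its image $p_{t',i} \to t'_i$ then forces $\sigma(t_i) = t'_i$ for every $i$; as $t$ enumerates all of $\Omega$, this identifies $\sigma|_{\Omega}$ with the unique permutation carrying $t$ to $t'$, which lies in $G$ by the first paragraph. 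Combined with the previous paragraph, this shows that the restriction of $\aut(\Gamma)$ to $\Omega$ is exactly $G$, as required.

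I expect the main obstacle to be the \emph{rigidity of the position gadget}: the whole point is that an automorphism must preserve the ordering $1, \ldots, n$ of coordinates, since otherwise one would recover only some closure of $G$ rather than $G$ itself. The directed-path-with-unique-source design is what pins this down — the source is detected by its in-degree, and arc-direction-preservation then propagates the coordinate order along the path — so the care needed lies in checking that no gadget vertex acquires spurious arcs (for instance, that the vertices of $\Omega$ have no out-arcs, and that each interior path vertex has a unique gadget out-neighbour) that could let an automorphism scramble the coordinates.
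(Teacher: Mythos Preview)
Your argument is correct. The paper does not supply its own proof of this proposition: it is stated with a citation to~\cite{Bouwer69,MathOverflow} and used as a black box in the proof of Lemma~\ref{lem-extended-perfect}. So there is no ``paper's proof'' to compare against directly.

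That said, your construction is essentially the one the paper \emph{does} carry out elsewhere, just not at this point. In Section~\ref{sec-perfect-set-of-lists} the map \(\pi\) encodes a set of lists in \(\Omega\) as an extended graph by attaching, to each list, a directed path of auxiliary vertices with slot arcs into \(\Omega\) and a two-colour vertex labelling --- exactly your position gadget. Lemma~\ref{lem-every-group-is-a-set-of-lists-stab} then records that every \(G \le \SOm\) is the stabiliser of \(A^{G}\) for an enumeration \(A\) of \(\Omega\), which is your orbit \(R = \tau^{G}\). The paper explicitly notes after that lemma that this combination gives an alternative constructive proof of Lemma~\ref{lem-extended-perfect}, and it makes the same size observation you anticipate: the resulting digraph has \((|G|+1)|\Omega|\) vertices. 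So you have independently reproduced the alternative route the authors sketch later, rather than the cited external constructions; what the citations buy (in principle) is a digraph whose size is not tied to \(|G|\), whereas your construction, like the paper's Section~\ref{sec-perfect-set-of-lists} version, is transparent but exponential in the worst case.

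Your rigidity discussion is well aimed: the crucial step is that vertex labels force \(\sigma(p_{t,i+1})\) to be the unique gadget out-neighbour of \(\sigma(p_{t,i})\), so the coordinate index is preserved along each path. That is exactly what makes the argument go through, and you have checked it.
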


\begin{lemma}\label{lem-extended-perfect}
  Every subgroup of \(\SOm\) has a perfect refiner in extended graph backtracking.
\end{lemma}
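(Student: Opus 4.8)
The plan is to reduce the claim to the existence result Proposition~\ref{prop-arbitrary-group-graph} via the characterisation of perfect refiners in Lemma~\ref{lem-when-perfect-refiners-exist}. In extended graph backtracking the set \(\mathcal{O}\) consists of the extended graphs on \(\Omega\), so by that lemma it suffices, for a given \(G \leq \SOm\), to produce an extended graph stack whose stabiliser in \(\SOm\) is exactly \(G\). Since the stabiliser of a length-one stack equals the stabiliser of its single entry, it is in fact enough to exhibit a single extended graph \(A\) on \(\Omega\) with \(\Auto{A} = G\); Corollary~\ref{cor-group-refiner-constant} then shows that the constant function with image the stack \([A]\) is a perfect refiner for \(G\).

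To build such an \(A\), first I would apply Proposition~\ref{prop-arbitrary-group-graph} to \(G\) to obtain a finite set \(V\) containing \(\Omega\) and a labelled digraph \(\Gamma\) on \(V\) whose stabiliser in \(\Sym{V}\) preserves \(\Omega\) setwise and restricts to \(G\) on \(\Omega\). If the vertex set supplied by the proposition does not already lie inside our fixed \(\Lambda\), I would relabel the vertices outside \(\Omega\) using elements of \(\Lambda \setminus \Omega\); this is possible since \(\Lambda \setminus \Omega\) is infinite while \(V \setminus \Omega\) is finite, and it leaves every relevant property intact. We may thus assume \(\Omega \subseteq V \subseteq \Lambda\), and set \(A \coloneqq \ov{\Gamma}\), the orbit of \(\Gamma\) under \(\Sym{V \setminus \Omega}\), which is by definition an extended graph on \(\Omega\) relative to \(V\).

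The core step is to compute \(\Auto{A}\). Applying Lemma~\ref{lem-extended-stabilises-restriction} with \(B \coloneqq A\) and \(\Delta \coloneqq \Gamma\) gives that \(\Auto{A} = \Iso{A}{A}\) is the restriction to \(\Omega\) of
\[
  T = \set{x \in \Sym{V}}{x\ \text{preserves}\ \Omega\ \text{setwise, and}\ \Gamma^{x} = \Gamma}.
\]
The key observation is that \(T\) is simply the full stabiliser \(\Stab_{\Sym{V}}(\Gamma)\): the condition \(\Gamma^{x} = \Gamma\) places \(x\) in that stabiliser, and by the choice of \(\Gamma\) in Proposition~\ref{prop-arbitrary-group-graph} every element of \(\Stab_{\Sym{V}}(\Gamma)\) preserves \(\Omega\) setwise, so the setwise condition in the definition of \(T\) is automatic. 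Restricting \(T\) to \(\Omega\) therefore yields the restriction of \(\Stab_{\Sym{V}}(\Gamma)\) to \(\Omega\), which is \(G\) by construction. Hence \(\Auto{A} = G\), as required.

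I do not anticipate a genuine obstacle, because the substantive difficulty---realising an arbitrary permutation group as the restriction to \(\Omega\) of a digraph automorphism group on an enlarged vertex set---is precisely what the cited Proposition~\ref{prop-arbitrary-group-graph} provides. The only matters needing care are the bookkeeping that places the auxiliary vertices inside \(\Lambda\), and the verification that the setwise-preservation hypothesis collapses the set \(T\) of Lemma~\ref{lem-extended-stabilises-restriction} onto the ordinary digraph stabiliser; both are routine.
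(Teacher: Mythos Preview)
Your proposal is correct and follows essentially the same route as the paper: apply Proposition~\ref{prop-arbitrary-group-graph}, relabel the extra vertices into \(\Lambda\), use Lemma~\ref{lem-extended-stabilises-restriction} to identify \(\Auto{\ov{\Gamma}}\) with the restriction of the digraph stabiliser, and conclude via Lemma~\ref{lem-when-perfect-refiners-exist}. Your write-up is slightly more explicit about why the setwise-preservation condition in \(T\) is automatic, but the argument is the same.
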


\begin{proof}
  Let \(G \leq \SOm\), and let \(\Gamma\) be a finite labelled digraph of the kind described in Proposition~\ref{prop-arbitrary-group-graph} for \(G\).
  By renaming the vertices that are not in \(\Omega\), we may assume that the vertex set of \(\Gamma\) is a subset of the infinite set \(\Lambda\) that we use throughout this article.
  By Lemma~\ref{lem-extended-stabilises-restriction} and Proposition~\ref{prop-arbitrary-group-graph}, the extended graph stack \([\ov{\Gamma}]\) has stabiliser \(G\) in \(\SOm\).
  The result follows by Lemma~\ref{lem-when-perfect-refiners-exist}.
\end{proof}

We primarily include Lemma~\ref{lem-extended-perfect} for its theoretical interest.
Given a group \(G\), the number of additional vertices required to construct a witness for the result in Proposition~\ref{prop-arbitrary-group-graph} may be of similar size to \(|G|\), which would normally be impractically large for use in a backtrack search. We are therefore also interested in how many extra vertices are required to represent different groups.

\section{Examples of perfect refiners}\label{sec-examples-perfect}

In this section,
we describe some refiners for the stabilisers and transporter sets of some commonly-occurring objects on which \(\SOm\) acts.
In most cases, we prove that the refiners are perfect, making frequent use of  Corollary~\ref{cor-perfect-O-stacksO}, Theorem~\ref{thm-perfect-stabilisers-and-transporters}, and Lemma~\ref{lem-transporter-list-of}.
The results for perfect refiners are summarised in Table~\ref{table-perfect-stab}.
As discussed in Section~\ref{sec-hierarchy}, a perfect refiner for a subset of \(\SOm\) in classical backtracking can be easily converted into a perfect refiner for the same set in partition backtracking, and so on.
In this hierarchical sense, each class of objects in Table~\ref{table-perfect-stab} appears by the `least' kind of backtracking in which all of their stabilisers and transporter sets have perfect refiners.

\begin{table}[!ht]
  \centering
  \begin{tabular}{llr}
    Backtracking & Stabilisers/transporter sets in \(\SOm\) with perfect refiners & Reference \\
    \hline
    Classical
      & Point in \(\Omega\).
        & Corollary~\ref{cor-perfect-O-stacksO} \\
      & List in \(\Omega\).
        & Corollary~\ref{cor-perfect-O-stacksO} \\
    \hline
    Partition
      & Ordered partition of \(\Omega\).
        & Corollary~\ref{cor-perfect-O-stacksO} \\
      & List of ordered partitions of \(\Omega\).
        & Corollary~\ref{cor-perfect-O-stacksO} \\
      & Subset of \(\Omega\).
        & Section~\ref{sec-subset} \\
      & List of subsets of \(\Omega\).
        & Lemma~\ref{lem-transporter-list-of} \\
      & Set of subsets of \(\Omega\) with pairwise distinct sizes.
        & Section~\ref{sec-set-of-subset-distinct-sizes} \\
    \hline
    Graph
      & Labelled digraph on \(\Omega\).
        & Corollary~\ref{cor-perfect-O-stacksO} \\
      & Graph or digraph on \(\Omega\).
        & Section~\ref{sec-perfect-graph-auto-iso} \\
      & Set of nonempty pairwise disjoint subsets of \(\Omega\).
        & Section~\ref{sec-perfect-set-disjoint-sets} \\
      & Unordered partition of \(\Omega\).
        & Section~\ref{sec-perfect-set-disjoint-sets} \\
      & Permutation under conjugation (elt.\ centraliser/conjugacy).
        & Section~\ref{sec-perfect-perm-conj} \\
      & List of permutations under conj.\ (subset/subgroup centraliser).
        & Lemma~\ref{lem-transporter-list-of} \\
    \hline
    Extended graph
      & Set of subsets of \(\Omega\).
        & Section~\ref{sec-perfect-set-of-sets} \\
      & Set of lists in \(\Omega\).
        & Section~\ref{sec-perfect-set-of-lists} \\
      & Set of (labelled) graphs or digraphs on \(\Omega\).
        & Section~\ref{sec-perfect-set-of-graphs} \\
      & Set of lists of (labelled) graphs or digraphs on \(\Omega\).
        & Section~\ref{sec-perfect-set-of-graph-stacks} \\
      & 2-closed subgroup under conjugation (normaliser/conjugacy).
        & Section~\ref{sec-perfect-2-closed-normaliser} \\
  \end{tabular}
  \caption{\label{table-perfect-stab}
    For the various kinds of backtracking in \(\SOm\), this table gives some objects on which \(\SOm\) acts, such that all stabilisers and transporter sets in \(\SOm\) of these objects have perfect refiners. A constructive argument is given in the reference. This table is not meant to be exhaustive.
  }
\end{table}

A given kind of backtracking may not have perfect refiners for \emph{all} stabilisers and transporter sets of some class of objects, but it may have perfect refiners for those of an important subclass of the objects.
In order to optimise implementations, it is important to understand these special cases well, but a thorough investigation of this topic is beyond the scope of this article.
Nevertheless, to demonstrate the principle, we consider sets of subsets of \(\Omega\) with pairwise distinct sizes
in partition backtracking
(Section~\ref{sec-set-of-subset-distinct-sizes}), and sets of nonempty pairwise disjoint subsets of \(\Omega\)
in graph backtracking
(Section~\ref{sec-perfect-set-disjoint-sets}), even though only extended graph backtracking has perfect refiners for all stabilisers and transporter sets of sets of subsets of \(\Omega\) (Section~\ref{sec-perfect-set-of-sets}).

\subsection{Examples of perfect refiners in partition backtracking}\label{sec-perfect-partition-BT}

To justify the part of Table~\ref{table-perfect-stab} that relates to partition backtracking, we first show, for each kind of object, and with \(|\Omega| \geq 5\), that not all stabilisers and transporter sets have perfect refiners in classical backtracking.
By Corollary~\ref{cor-perfect-list-of} and Table~\ref{table-perfect-groups}, the following examples suffice:
If \(n \in \N\) with \(n \geq 5\) and \(\Omega \coloneqq \{1,\ldots,n\}\), then the ordered partition \([\{1,2\},\{3,\ldots,n\}]\) of \(\Omega\), the subset \(\{1,2\}\) of \(\Omega\), and the set of subsets \(\left\{\{1,2\},\{3,\ldots,n\}\right\}\) of \(\Omega\) have stabiliser \(\< (1\,2), (3\,4), (3\,\ldots\,n) \> \cong \Cn{2} \times \Sn{n-2}\) in \(\SOm\).

\subsubsection{Stabilisers and transporters of sets of points}\label{sec-subset}

We define a function \(\pi\) from the set of all subsets of \(\Omega\) to the set of all stacks of ordered partitions of \(\Omega\).
We define \(\pi(\varnothing)\) to be the empty stack on \(\Omega\), and \(\pi(\Omega)\) to be the stack consisting of the ordered partition \([\Omega]\), and for any nonempty proper subset \(A\) of \(\Omega\), we define \(\pi(A)\) to be the stack consisting of the ordered partition \([A, \Omega \setminus A]\).
Since \(\pi\) is an injective \(\SOm\)-invariant function, Theorem~\ref{thm-perfect-stabilisers-and-transporters} and Corollary~\ref{cor-perfect-O-stacksO} give the desired perfect refiners.

\subsubsection{Stabilisers and transporters of sets of subsets with pairwise distinct sizes}\label{sec-set-of-subset-distinct-sizes}

We define an injective \(\SOm\)-invariant function \(\sigma\) that maps each set of subsets of \(\Omega\) with pairwise distinct sizes to a list of subsets of \(\Omega\) as follows.
Let \(A\) be any such set.
There exists \(m \in \N \cup \{0\}\) and subsets \(A_{1}, \ldots, A_{m}\) of \(\Omega\), uniquely indexed by increasing size, such that \(A = \{A_{1}, \ldots, A_{m}\}\).
We define \(\sigma(A) \coloneqq [A_{1}, \ldots, A_{m}]\), and remark that a permutation of \(\Omega\) stabilises the set of subsets \(A\) if and only if it stabilises each of the subsets that it contains.
Perfect refiners for the stabilisers and transporters can thus be obtained from \(\sigma\) via Section~\ref{sec-subset}, Lemma~\ref{lem-transporter-list-of}, and Theorem~\ref{thm-perfect-stabilisers-and-transporters}.

\subsection{Examples of perfect refiners in graph backtracking}\label{sec-perfect-graph-BT}

For the kinds of objects listed by graph backtracking in Table~\ref{table-perfect-stab}, it is easy to find examples for all \(|\Omega| \geq 4\) where the stabiliser in \(\SOm\) is not a direct product of symmetric groups, which means that the stabiliser has no perfect refiner in partition backtracking (see Table~\ref{table-perfect-groups}).
It remains to show that there are perfect refiners in graph backtracking for the stabilisers and transporter sets of these objects.

\subsubsection{Graph and digraph automorphisms and isomorphisms}\label{sec-perfect-graph-auto-iso}

We recall our standard notation for graphs from Section \ref{sec-prelim}.
Let \(\pi\) be the function that maps each graph \((\Omega,E)\) to the digraph \((\Omega, \set{(\alpha,\beta), (\beta,\alpha)}{ \{\alpha,\beta\} \in E} )\). This means that each edge \(\{\alpha,\beta\}\) is replaced by the arcs \((\alpha,\beta)\) and \((\beta,\alpha)\).
In addition, we define \(x\) to be some fixed label, and we let \(\sigma\) be the function that maps each digraph on \(\Omega\) to the labelled digraph on \(\Omega\) formed by assigning the label \(x\) to all of its vertices and arcs.
Then \(\pi\) and \(\sigma\) are injective \(\SOm\)-invariant functions, and so Theorem~\ref{thm-perfect-stabilisers-and-transporters} and Corollary~\ref{cor-perfect-O-stacksO} give perfect refiners for all stabilisers and transporter sets in \(\SOm\) of graphs and digraphs on \(\Omega\) in graph backtracking.

\subsubsection{Stabilisers and transporters of sets of nonempty pairwise disjoint sets}\label{sec-perfect-set-disjoint-sets}

We define a function \(\phi\) from the set of all subsets of \(\Omega\) to the set of all digraphs on \(\Omega\) as follows.
Let \(A \coloneqq \{A_1, \ldots, A_k\}\), for some \(k \in \N \cup \{0\}\), be a set of subsets of \(\Omega\),
and define \(\phi(A)\) to be the digraph on \(\Omega\) that consists of a clique (plus loops) on the vertices of each member of \(A\), i.e.
\[
  \phi(A) \coloneqq
  \big(
    \Omega,\
    \set{(\alpha,\beta) \in \Omega\times\Omega}
        {\{\alpha, \beta\} \subseteq A_{i} \ \text{for some}\ i \in \{1,\ldots,k\}}
  \big).
\]
See Figure~\ref{fig-set-disjoint-set} for an example.
Then \(\phi\) is \(\SOm\)-invariant, and so by Theorem~\ref{thm-perfect-stabilisers-and-transporters} and Section~\ref{sec-perfect-graph-auto-iso}, we obtain refiners in graph backtracking for the stabilisers and transporter sets of all sets of subsets of \(\Omega\).
Furthermore, when restricted to sets of nonempty pairwise disjoint subsets of \(\Omega\),
the function \(\phi\) is injective and \(\SOm\)-invariant, and so the corresponding refiners are perfect.

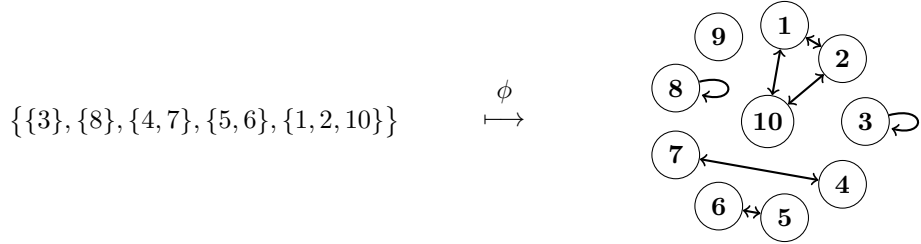
\begin{figure}[!ht]
  \begin{center}
    \begin{tikzpicture}
      \node (11) at (-7.5,0) {\(\big\{ \{3\}, \{8\}, \{4,7\}, \{5,6\}, \{1,2,10\} \big\}\)};

      \node (21) at (-3.5,0.4) {\(\phi\)};
      \node (21) at (-3.5,0) {\(\longmapsto\)};

      \node[circle, draw=black] (10) at (0,0) {\!\(\mathbf{10}\)\!};
      \foreach \x in {1,2,3,4,5,6,7,8,9} {
        \node[circle, draw=black] (\x) at (-\x*40+120:1.3cm) {\(\mathbf{\x}\)};};

        \arcSym{1}{2}
        \arcSym{1}{10}
        \arcSym{2}{10}
        \arcSym{4}{7}
        \arcSym{5}{6}
        \looparcR{3}
        \looparcR{8}

    \end{tikzpicture}
  \end{center}
  \caption[Perfect refiner example for the stabiliser of a set of disjoint subsets]{
    An example of the function \(\phi\) from Section~\ref{sec-perfect-set-disjoint-sets} for the given set of disjoint subsets of \(\Omega \coloneqq \{1,\ldots,10\}\).
    To reduce visual clutter, loops are omitted at vertices of non-singleton subsets.
  }\label{fig-set-disjoint-set}
\end{figure}

\subsubsection{Permutation centraliser and conjugacy, and subgroup centraliser}\label{sec-perfect-perm-conj}

We consider \(\SOm\) acting on itself by conjugation.
Let \(\psi\) be the function from \(\SOm\) to the set of all digraphs on \(\Omega\) where for each \(g \in \SOm\), \(\psi(g) \coloneqq \left(\Omega, \set{(\alpha, \beta) \in \Omega \times \Omega}{\alpha ^ g = \beta}\right) \). See Figure~\ref{fig-perm-centraliser}, where we explain an example for illustration.

\begin{figure}[!ht]
  \begin{center}
    \begin{tikzpicture}
      \node (11) at (-5,0) {\((1\,2)(3\,6\,5)\)};

      \node (21) at (-3,0.4) {\(\psi\)};
      \node (21) at (-3,0) {\(\longmapsto\)};

      \foreach \x in {1,2,3,4,5,6} {
        \node[circle, draw=black] (\x) at (-\x*60+120:1cm) {\(\mathbf{\x}\)};};

      \arcSym{1}{2}
      \arc{3}{6}
      \arc{6}{5}
      \arc{5}{3}
      \looparcL{4}
    \end{tikzpicture}
  \end{center}
  \caption[Perfect refiner for permutation centraliser]{
    An example of the function \(\psi\) from Section~\ref{sec-perfect-perm-conj} for the permutation \((1\,2)(3\,6\,5)\), with \(\Omega \coloneqq \{1,\ldots,6\}\).
    This digraph gives rise to a perfect refiner for the centraliser of \((1\,2)(3\,6\,5)\) in \(\SOm\).
  }\label{fig-perm-centraliser}
\end{figure}
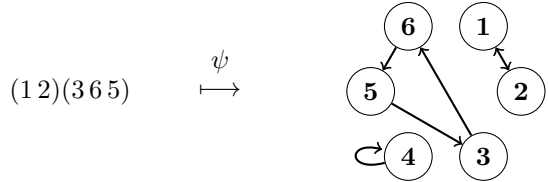

Then \(\psi\) is injective and \(\SOm\)-invariant, giving perfect refiners in graph backtracking for all centralisers in \(\SOm\) of elements of \(\SOm\), and for the transporter sets for permutation conjugacy (see Theorem~\ref{thm-perfect-stabilisers-and-transporters} and Section~\ref{sec-perfect-graph-auto-iso}).
By Lemma~\ref{lem-transporter-list-of}, this gives perfect refiners for all stabilisers of {lists} of permutations in \(\SOm\) under conjugation.
The centraliser in \(\SOm\) of a subgroup of \(\SOm\) is the pointwise stabiliser of any of its generating sets, and thus we can obtain a perfect refiner in graph backtracking for the centraliser in \(\SOm\) of any subgroup of \(\SOm\) given by a generating set.

\subsection{Examples of perfect refiners in extended graph backtracking}\label{sec-perfect-graphsplus}

By Lemma~\ref{lem-extended-perfect}, all stabilisers and transporter sets have perfect refiners in extended graph backtracking, but this knowledge is not necessarily immediately useful:
the extended graphs underpinning this lemma may require impractically many additional vertices, and anyway, the construction requires knowing the stabiliser or transporter set in advance.
In order to avoid too many additional vertices or circular arguments, we wish to construct refiners in extended graph backtracking using only facts about the relevant objects that can be computed cheaply.

In the forthcoming examples, we specify an extended graph
by giving one of the labelled digraphs that it contains.
Therefore, in each case, we must prove that this is well-defined.
The fixed set of vertices \(\Lambda\) is totally ordered, and we always choose vertices from this set in ascending order.
Thus it suffices to show, in each case, that our choices lead to labelled digraphs that differ only by a permutation of their vertices in \(\Lambda \setminus \Omega\).

\subsubsection{Stabilisers and transporters of sets of sets}\label{sec-perfect-set-of-sets}

We define a function \(\phi\) from the set of all sets of subsets of \(\Omega\) to the set of all extended graphs on \(\Omega\).
Let \(A \coloneqq \{A_1, \ldots, A_k\}\) be a set of subsets of \(\Omega\),
where \(k \in \N \cup \{0\}\), indexed arbitrarily,
and let \(V_{k} \coloneqq \{\beta_1, \ldots, \beta_k\} \subseteq \Lambda \setminus \Omega\) comprise the \(k\) least elements of \(\Lambda \setminus \Omega\).
We define \(\Gamma_{A}\) to be the labelled digraph on \(\Omega \cup V_{k}\) with arcs
\(\bigcup_{i = 1}^{k} \set{(\alpha, \beta_{i})}{\alpha \in A_{i}}\),
where vertices in \(\Omega\) are labelled \emph{white}, and all other vertices and arcs are labelled \emph{black}.
Thus there is a \emph{white} vertex in \(\Gamma_{A}\) for each member of \(\Omega\), and a \emph{black} vertex for each member of \(A\); each vertex in \(\Omega\) has arcs to the \emph{black} vertices corresponding to the members of \(A\) that contain it.
We define \(\phi(A) \coloneqq \ov{\Gamma_{A}}\) to be the extended graph containing \(\Gamma_{A}\).

\begin{figure}[!ht]
  \begin{center}
    \begin{tikzpicture}
      \node (11) at (-6,0) {\(\big\{ \varnothing, \{1\}, \{2,3,7\}, \{5,6,7\} \big\}\)};

      \node (21) at (-3,0.4) {\(\phi\)};
      \node (21) at (-3,0) {\(\longmapsto\)};

      \node[circle, draw=black] (7) at (0,0) {\(\mathbf{7}\)};
      \foreach \x in {1,2,3,4,5,6} {
        \node[circle, draw=black] (\x) at (-\x*60+120:1cm) {\(\mathbf{\x}\)};};

      \node[circle, draw=black, text=white, fill=black] (8) at (210:1.75cm) {\(\mathbf{8}\)};
      \node[circle, draw=black, text=white, fill=black] (9) at (30:1.75cm) {\(\mathbf{9}\)};
      \node[circle, draw=black, text=white, fill=black] (10) at (330:1.75cm) {\!\(\mathbf{10}\)\!};
      \node[circle, draw=black, text=white, fill=black] (11) at (150:1.75cm) {\!\(\mathbf{11}\)\!};

      \arc{1}{9}
      \arc{2}{10}
      \arc{3}{10}
      \arc{7}{10}
      \arc{5}{11}
      \arc{6}{11}
      \arc{7}{11}
    \end{tikzpicture}
  \end{center}
  \caption[Perfect refiner example for the stabiliser of a set of sets]{
    A representative labelled digraph of the extended graph defined by the function \(\phi\) from Section~\ref{sec-perfect-set-of-sets}, for the set of subsets \(\big\{ \varnothing, \{1\}, \{2,3,7\}, \{5,6,7\} \big\}\) of \(\Omega \coloneqq \{1,\ldots,7\}\), with \(\Lambda \coloneqq \N\).
  }\label{fig-set-of-sets}
\end{figure}

The only choice involved in constructing a labelled digraph using the method described above is the indexing of the members of \(A\). Therefore the labelled digraphs produced can differ only by a permutation of the names of the vertices outside of \(\Omega\),
and \(\phi\) is well-defined.
It is clear that \(\phi\) is injective and \(\SOm\)-invariant, and so perfect refiners are given by Theorem~\ref{thm-perfect-stabilisers-and-transporters} and Corollary~\ref{cor-perfect-O-stacksO}.

Next, we discuss a situation where there is no perfect refiner in graph backtracking.
Let \(H\) be the subgroup \(\< (1\,2\,3)(4\,5\,6), (1\,2)(3\,5) \>\) of \(\Sym{\{1,\dots,6\}}\).
It can be shown that \(H\) is a proper subgroup of \(\Sym{\{1,\dots,6\}}\) that acts 2-transitively on \(\Omega\), and which is therefore not 2-closed.
If we define \(O\) to be the orbit of \({\{1,2,3\}}\) under \(H\), then \(H\) is exactly the stabiliser of \(O\) in \(\Sym{\{1,\ldots,6\}}\).
Let \(n \in \N\) with \(n \geq 7\) and let \(\Omega \coloneqq \{1,\ldots,n\}\).
It follows that the group \(G \coloneqq H \times \Sym{\{7,\ldots,n\}}\)
(identifying this direct product with a subgroup of \(\SOm\) as in Table~\ref{table-perfect-groups}) is the stabiliser in \(\SOm\) of the set of subsets \(O\), but it is not 2-closed, and therefore it has no perfect refiner in graph backtracking.

\subsubsection{Stabilisers and transporters of sets of lists}\label{sec-perfect-set-of-lists}

We define a function \(\pi\) that maps each set of lists in \(\Omega\) to an extended graph on \(\Omega\).
Let \(A \coloneqq \{A_1, \ldots, A_k\}\) be a set of nonempty lists in \(\Omega\), for some \(k \in \N \cup \{0\}\), indexed arbitrarily. Moreover,
let \(r \coloneqq \prod_{i = 1}^{k} |A_i|\) be the product of the lengths of these lists,
let \(V_{r}\) be a set of the least \(r\) elements of \(\Lambda \setminus \Omega\),
and let \(\rho\) be an arbitrary bijection from \(\set{(i,j)}{i \in \{1,\ldots,k\} \text{\ and\ } j \in \{1,\ldots,|A_{i}|\}}\) to \(V_{r}\).

We define \(\pi(A)\) to be the extended graph that contains the labelled digraph on \(\Omega \cup V_{r}\) with arcs
\[
  \bigset{ \big( \rho(i,j),\ A_{i}[j] \big) }
         { 1 \leq i \leq k,\ 1 \leq j \leq |A_{i}| }
  \cup
  \bigset{ \big( \rho(i,j),\ \rho(i,j+1) \big) }
         { 1 \leq i \leq k,\ 1 \leq j < |A_{i}| },
\]
where the vertices in \(\Omega\) are labelled \emph{white}, and the vertices in \(V_{r}\) and all arcs are labelled \emph{black}.

Each vertex in \(V_{r}\) corresponds via \(\rho\) to a position in a list in \(A\);
the arcs between vertices in \(V_{r}\) encode the ordering of each list,
and the arcs towards vertices in \(\Omega\) encode the entry at each position.

For a set of lists in \(\Omega\) that includes the empty list, \([\ ]\), we proceed as above, except that the labelled digraph has an additional isolated vertex (the next least element of \(\Lambda \setminus \Omega\)) with label \emph{black}.

It is straightforward to see that \(\pi\) is well-defined (once a bijection \(\rho\) is fixed), injective, and \(\SOm\)-invariant,
and so with this method, it is possible to use Theorem~\ref{thm-perfect-stabilisers-and-transporters} and Corollary~\ref{cor-perfect-O-stacksO} to produce perfect refiners in extended graph backtracking for the stabilisers and transporter sets of any sets of lists in \(\Omega\).

\begin{figure}[!ht]
  \begin{center}
    \begin{tikzpicture}
      \tikzstyle{black}=[circle, draw=white, fill=black, text=white]
      \tikzstyle{white}=[circle, draw=black]

      \node (11) at (-7,0) {\(\big\{ [1,6], [4,3], [5,2,5], [\ ] \big\}\)};

      \node (21) at (-4,0.4) {\(\pi\)};
      \node (22) at (-4,0) {\(\longmapsto\)};

      \foreach \x in {1,3,4,6} {
        \node[white] (\x) at (-\x*60+210:1.2cm) {\(\mathbf{\x}\)};
      };
      \node[white] (2) at (90:0.6cm) {\(\mathbf{2}\)};
      \node[white] (5) at (-90:0.6cm) {\(\mathbf{5}\)};

      \node[black] (7) at (-2.2,0.6) {\(\mathbf{7}\)};
      \node[black] (8) at (-2.2,-0.6) {\(\mathbf{8}\)};

      \arc{7}{1}
      \arc{8}{6}
      \arc{7}{8}

      \node[black] (9) at (2.2,-0.6) {\(\mathbf{9}\)};
      \node[black] (10) at (2.2,0.6) {\!\(\mathbf{10}\)\!};

      \arc{9}{4}
      \arc{10}{3}
      \arc{9}{10}

      \node[black] (11) at (-1,1.8) {\!\(\mathbf{11}\)\!};
      \node[black] (12) at (0,1.8) {\!\(\mathbf{12}\)\!};
      \node[black] (13) at (1,1.8) {\!\(\mathbf{13}\)\!};

      \arc{11}{12}
      \arc{12}{13}
      \arc{11}{5}
      \arc{12}{2}
      \arc{13}{5}

      \node[black] (14) at (2.2,1.8) {\!\(\mathbf{14}\)\!};

    \end{tikzpicture}
  \end{center}
  \caption[Perfect refiner example for the stabiliser of a set of lists]{
    A representative labelled digraph of the extended graph defined by the function \(\pi\) from Section~\ref{sec-perfect-set-of-lists}, for the set of lists \(A \coloneqq \big\{ [1,6], [4,3], [5,2,5], [\ ] \big\}\) in \(\Omega \coloneqq \{1,\ldots,6\}\), with \(\Lambda \coloneqq \N\).
    Its stabiliser in \(\Sym{\{1,\ldots,14}\}\) preserves \(\Omega\) setwise, and the restriction of this stabiliser to \(\Omega\) is \(\Auto{A}\).
  }\label{fig-set-of-lists}
\end{figure}

In combination with Lemma~\ref{lem-graph-stab-2closed}, the following lemma implies that for all sets \(\Omega\) with \(|\Omega| > 3\), there exist sets of lists in \(\Omega\) whose stabilisers and transporter sets do not have perfect refiners in graph backtracking.
This result also gives a different constructive proof of Lemma~\ref{lem-extended-perfect}.
However, the method described in this section is impractical for constructing a perfect refiner for an arbitrary subgroup \(G \leq \SOm\), since it would produce a labelled digraph with \((|G|+1)|\Omega|\) vertices.

\begin{lemma}\label{lem-every-group-is-a-set-of-lists-stab}
  Every subgroup of \(\SOm\) is the stabiliser in \(\SOm\) of a set of lists in \(\Omega\).
\end{lemma}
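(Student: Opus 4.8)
The plan is to realise the given group $G \leq \SOm$ as the setwise stabiliser of the orbit, under the list action, of a single list that enumerates all of $\Omega$. Concretely, fix any duplicate-free enumeration $B \coloneqq [\omega_1,\ldots,\omega_n]$ of $\Omega$ as a list, so that $\{\omega_1,\ldots,\omega_n\} = \Omega$, and set $A \coloneqq \{B^{g} : g \in G\}$, the orbit of $B$ under $G$ with respect to the induced action on lists. Since $G$ and $\Omega$ are finite, $A$ is a finite set of lists in $\Omega$, and I claim that $\Auto{A} = G$, which is exactly what the lemma asserts.

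First I would record that the map from $\SOm$ to the set of all lists in $\Omega$, sending $g \mapsto B^{g}$, is injective: if $B^{g} = B^{h}$, then $\omega_i^{g} = \omega_i^{h}$ for every $i$, and since the $\omega_i$ exhaust $\Omega$, this forces $g = h$. Next, for any $h \in \SOm$, the list action satisfies $B^{gh} = (B^{g})^{h}$, and hence $A^{h} = \{(B^{g})^{h} : g \in G\} = \{B^{g'} : g' \in Gh\}$. Thus $A^{h}$ is the image of the coset $Gh$ under the injective map $g' \mapsto B^{g'}$, whereas $A$ is the image of $G$. By injectivity, $A^{h} = A$ holds if and only if $Gh = G$ as subsets of $\SOm$, which in turn holds precisely when $h \in G$. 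Therefore $\Auto{A} = G$, completing the argument.

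There is no serious obstacle in this proof; the one point that needs care is the passage from the setwise equality $A^{h} = A$ to the group-theoretic equality $Gh = G$. This is exactly where \emph{global} injectivity of $g \mapsto B^{g}$ is essential (and available, because $B$ enumerates every point of $\Omega$): injectivity on $G$ alone would not rule out the images of two distinct cosets coinciding. Finally, I would observe that this construction matches the vertex count claimed in the surrounding text. The set $A$ consists of $|G|$ lists, each of length $|\Omega|$, so applying the function $\pi$ of Section~\ref{sec-perfect-set-of-lists} to $A$ yields a labelled digraph with $|\Omega|$ \emph{white} vertices and $|G|\,|\Omega|$ \emph{black} vertices, that is, $(|G|+1)|\Omega|$ vertices in total, which is both a fresh constructive proof of Lemma~\ref{lem-extended-perfect} and the source of the impracticality noted there.
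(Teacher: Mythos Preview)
Your proof is correct and takes essentially the same approach as the paper: the paper's one-line proof fixes an enumeration \(A\) of \(\Omega\) and asserts that \(G\) is the stabiliser of \(A^{G}\), and you have simply supplied the details (injectivity of \(g \mapsto B^{g}\) and the coset argument) that the paper leaves implicit. Your observation about the vertex count \((|G|+1)|\Omega|\) also matches the surrounding discussion exactly.
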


\begin{proof}
  Let \(G \leq \SOm\) and \(A\) be an enumeration of \(\Omega\).
  Then \(G\) is the stabiliser in \(\SOm\) of \(A^{G}\).
\end{proof}

\subsubsection{Stabilisers and transporters of sets of graphs, digraphs, or labelled digraphs}\label{sec-perfect-set-of-graphs}

We define a function \(\psi\) from the set of all sets of labelled digraphs on \(\Omega\) to the set of all extended graphs on \(\Omega\) that is injective and \(\SOm\)-invariant.
Therefore \(\psi\) can be combined with the functions of Section~\ref{sec-perfect-graph-auto-iso}, via Theorem~\ref{thm-perfect-stabilisers-and-transporters}, to give perfect refiners in extended graph backtracking for the stabilisers and transporter sets of sets of graphs or digraphs on \(\Omega\) that are not necessarily labelled.

First, we fix labels \(\#\) and \(\circledast\) that are not allowed to be used as labels in any labelled digraph on \(\Omega\).

Let \(A \coloneqq \{\Gamma_{1}, \ldots, \Gamma_{k}\}\) be a set of labelled digraphs on \(\Omega\), for some \(k \in \N \cup \{0\}\), indexed arbitrarily.

Roughly speaking, we build a new labelled digraph from a disjoint union of copies of the \(k\) members of \(A\), retaining labels,
and adding arcs that anchor each copy, and arcs that maintain the correspondences between \(\Omega\) and the vertices of the copies.
We give an example in Figure~\ref{fig-set-of-graphs-to-extended-graph} and discuss the precise construction 
in the following paragraph.

Let \(\{V_1, \ldots, V_k\}\) be an arbitrary partition of the least \(k |\Omega|\) elements of \(\Lambda \setminus \Omega\) into \(k\) parts of size \(|\Omega|\), and for each \(i \in \{1,\ldots,k\}\), let \(\tau_{i}\) be an arbitrary bijection from \(\Omega\) to \(V_{i}\).
In addition, let \(X \coloneqq \{x_1, \ldots, x_k\}\) be the least \(k\) elements of \(\Lambda \setminus (\Omega \cup V_1 \cup \cdots \cup V_k)\), indexed arbitrarily.
Then we define \(\psi(A)\) to be the extended graph that contains the labelled digraph on \(\Omega \cup V_1 \cup \cdots \cup V_k \cup X\) with arcs
\[
  \bigcup_{i = 1}^{k}
    \left(
      \bigset{ ( \tau_{i}(\alpha), \alpha),\ ( \tau_{i}(\alpha), x_{i}  )}
             { \alpha \in \Omega }
      \cup
      \bigset{ ( \tau_{i}(\alpha), \tau_{i}(\beta) )}
             { ( \alpha, \beta ) \ \text{is an arc of}\ \Gamma_{i} }
    \right),
\]
where vertices in \(\Omega\) and arcs ending in \(\Omega\) are labelled \(\#\),
vertices in \(X\) and arcs ending in \(X\) are labelled \(\circledast\),
and for all \(\alpha,\beta \in \Omega\) and \(i \in \{1,\ldots,k\}\), the vertex \(\tau_{i}(\alpha)\) has the label of \(\alpha\) in \(\Gamma_{i}\), and the arc \((\tau_{i}(\alpha), \tau_{i}(\beta))\) (if present) has the label of \((\alpha,\beta)\) in \(\Gamma_{i}\).

\begin{figure}[!ht]
  \small
  \centering
  \begin{tikzpicture}
    \tikzstyle{white}=[circle, draw=black,]
    \tikzstyle{black}=[circle, draw=white, fill=black!100, text=white]
    \tikzstyle{grey}=[circle, draw=black, fill=gray!25]

    \node[black] (21) at (-8, 1.733) {1};
    \node[black] (22) at (-6.267, 1.733) {2};
    \node[black] (23) at (-7.134, 0) {3};
    \node (20) at (-7.134,1) {\(\Gamma_{1}\)};
    \node (40) at (-5.5,1) {and};
    \arc{21}{22}
    \arc{22}{23}
    \arc{23}{21}
    \node[black] (31) at (-5, 1.733) {1};
    \node[black] (32) at (-3.267, 1.733) {2};
    \node[black] (33) at (-4.134, 0) {3};
    \node (30) at (-4.134,1) {\(\Gamma_{2}\)};
    \arc{32}{31}
    \arc{33}{32}
    \arc{31}{33}

    \node (40) at (-1.8,1.4) {\(\psi\)};
    \node (41) at (-1.8,1) {\(\longmapsto\)};

    \node[white] (1) at (3, 2.5) {1};
    \node[white] (2) at (3, 1.0) {2};
    \node[white] (3) at (3, 0) {3};

    \node[black] (4) at (0, 1.733) {4};
    \node[black] (5) at (1.733, 1.733) {5};
    \node[black] (6) at (0.866, 0) {6};

    \node[black] (7) at (4.2, 1.733) {7};
    \node[black] (8) at (5.933, 1.733) {8};
    \node[black] (9) at (5.066, 0) {9};

    \node[grey] (10) at (-0.866, 0) {\!10\!};
    \node[grey] (11) at (6.8, 0) {\!11\!};

    \arcDash{4}{1}
    \arcDash{7}{1}
    \arcDash{5}{2}
    \arcDash{8}{2}
    \arcDash{6}{3}
    \arcDash{9}{3}

    \arc{4}{5}
    \arc{5}{6}
    \arc{6}{4}

    \arc{8}{7}
    \arc{9}{8}
    \arc{7}{9}

    \arcDot{4}{10}
    \arcDot{5}{10}
    \arcDot{6}{10}
    \arcDot{7}{11}
    \arcDot{8}{11}
    \arcDot{9}{11}

  \end{tikzpicture}
  \caption{\label{fig-set-of-graphs-to-extended-graph}
    Labelled digraphs \(\Gamma_{1}\) and \(\Gamma_{2}\) on \(\Omega \coloneqq \{1,2,3\}\) with all labels \emph{black},
    and a representative labelled digraph of the extended graph \(\psi(\{\Gamma_{1}, \Gamma_{2}\})\), with \(\Lambda \coloneqq \N\) and \(\psi\) from Section~\ref{sec-perfect-set-of-graphs}.
    Vertices and arcs are differently patterned, according to label.
    The stabiliser in \(\Sym{\{1,\ldots,11\}}\) of this labelled digraph preserves \(\Omega\) setwise, and its restriction to \(\Omega\) is the setwise stabiliser of \(\{\Gamma_{1},\Gamma_{2}\}\) in \(\SOm\).
  }
\end{figure}
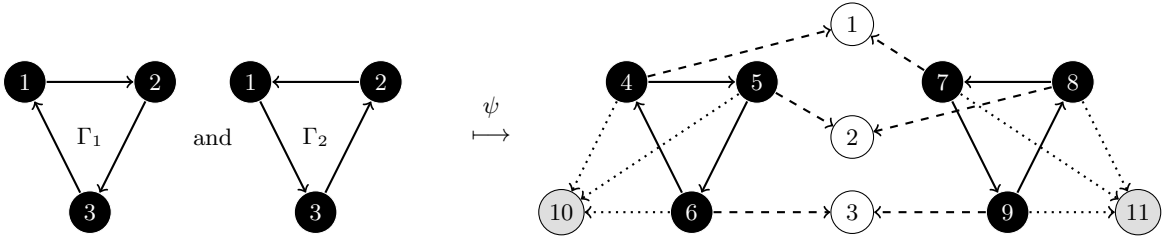

Different choices in the construction lead to labelled digraphs that differ only by a permutation of the vertices in \(\Lambda \setminus \Omega\), so \(\psi\) is well-defined.
The injectivity of \(\psi\) is guaranteed by the vertices in \(X\) and their arcs; these could be omitted for sets of connected labelled digraphs.
It is clear that \(\psi\) is \(\SOm\)-invariant.

\begin{remark}

\mbox{}\\
(a)  The proof of Lemma~\ref{lem-every-group-is-a-set-of-lists-stab} can be adapted to show that when \(|\Omega| > 3\),
  there exist stabilisers and transporter sets of sets of labelled digraphs on \(\Omega\) without perfect refiners in graph backtracking.\\
(b)  The fundamental idea behind this technique can be adapted to build perfect refiners for the stabilisers and transporter sets of extended graphs in extended graph backtracking.
\end{remark}

\subsubsection{Stabilisers and transporters of sets of labelled digraph stacks}\label{sec-perfect-set-of-graph-stacks}

Let \(\zeta\) be the injective \(\SOm\)-invariant function from the set of all labelled digraphs stacks on \(\Omega\) to the set of all labelled digraphs on \(\Omega\) that is given in~\cite[Section~3.1]{GB_published}.
Moreover, let \(k \in \N_{0}\).
Then the function that maps any set \(\{S_{1}, \ldots, S_{k}\}\) of labelled digraph stacks on \(\Omega\) to the set of labelled digraphs \(\{\zeta(S_{1}), \ldots, \zeta(S_{k})\}\) is injective and \(\SOm\)-invariant.
Thus we can use Theorem~\ref{thm-perfect-stabilisers-and-transporters} in combination with Section~\ref{sec-perfect-set-of-graphs} to construct perfect refiners in extended graph backtracking for the stabilisers and transporter sets of arbitrary sets of labelled digraphs stacks on \(\Omega\).
This allows perfect refiners in graph backtracking for the stabilisers and transporter sets of one kind of object to be converted into perfect refiners in extended graph backtracking for the stabilisers and transporter sets of sets of those objects.

\subsubsection{Normalisers and sets of conjugating elements of subgroups}\label{sec-perfect-2-closed-normaliser}

Any subgroup of a group \(K\) acts on the set of subgroups of \(K\) by conjugation and the stabiliser of a subgroup \(G\) under conjugation by a subgroup \(H\) is the \emph{normaliser of \(G\) in \(H\)}, denoted \(N_{H}(G)\).

Computing normalisers and deciding subgroup conjugacy are typical use cases for backtrack search, but existing techniques seem to find these problems particularly difficult.
Describing refiners for normalisers has been and continues to be an important area of research, see for example~\cite{Theissen97,MunSeeThesis}.

One intuitive explaination for the difficulty is that a normaliser may permute structures of a group (such as orbits) that would be fixed in the context of other search problems, which may reduce the potential for pruning.
In the earlier parts of Section~\ref{sec-perfect-graphsplus} and in Table~\ref{table-perfect-stab}, we have seen constructions of perfect refiners in extended graph backtracking for the stabilisers and transporter sets in \(\SOm\) of many kinds of \emph{sets} of objects that do not have perfect refiners in existing backtracking settings.

This suggests that the extended graph backtracking technique may lend itself well to the development of better refiners for normalisers and for sets of conjugating elements of subgroups.
The strategy would be to identify structures that are permuted by the normaliser, and to then develop refiners for the stabilisers and transporter sets of sets of such structures.
As a first step, we examine orbital graphs.

\begin{prop}
  \label{prop-normaliser-permutes-orbitals}
  Let \(G,H \leq \SOm\) and \(x \in \SOm\).
  If \(G^{x} = H\), then \(x\) transports the set of orbital graphs of \(G\) to the set of orbital graphs of \(H\).
  In particular, the normaliser of \(G\) in \(\SOm\) stabilises the set of orbital graphs of \(G\).
\end{prop}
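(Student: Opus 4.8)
The plan is to reduce everything to how $x$ acts on the orbits of $G$ on $\Omega \times \Omega$, since an orbital graph of $G$ is precisely a digraph of the form $(\Omega, A)$ where $A$ is one of these orbits, and the action of $\SOm$ on digraphs only moves the arc set, via $(\Omega, A)^x = (\Omega, A^x)$.

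First I would prove the core claim: if $A$ is an orbit of $G$ on $\Omega \times \Omega$, then $A^x$ is an orbit of $H = G^x$. Choosing any $p \in A$, so that $A = p^G$, the computation is
\[
  A^x = (p^G)^x = \set{(p^g)^x}{g \in G} = \set{(p^x)^{x^{-1} g x}}{g \in G} = (p^x)^{G^x} = (p^x)^H,
\]
which is exactly the orbit of $H$ through the point $p^x$. The middle equality rests on the conjugation identity $(p^g)^x = (p^x)^{x^{-1} g x}$, which follows from the convention $g^x = x^{-1} g x$ and associativity of the action. Combined with $(\Omega, A)^x = (\Omega, A^x)$, this shows that $x$ sends each orbital graph $(\Omega, A)$ of $G$ to the orbital graph $(\Omega, A^x)$ of $H$.

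To upgrade this to the claim that $x$ transports the entire \emph{set} of orbital graphs of $G$ to that of $H$, I would exploit the symmetry of the hypothesis: since $G^x = H$ is equivalent to $H^{x^{-1}} = G$, the same argument applied to $x^{-1}$ shows that $x^{-1}$ sends every orbital graph of $H$ to an orbital graph of $G$. These two maps are mutually inverse, so $x$ restricts to a bijection between the two sets of orbital graphs, giving the desired set equality. The \emph{in particular} clause is then the special case $H = G$: any $x$ in the normaliser of $G$ in $\SOm$ satisfies $G^x = G$, hence transports the set of orbital graphs of $G$ to itself, i.e.\ stabilises it.

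There is no genuine obstacle here; the only thing requiring care is the bookkeeping around the conjugation convention, specifically verifying the identity $(p^g)^x = (p^x)^{x^{-1} g x}$ that drives the displayed computation, and noting that $A^x$ is a single nonempty orbit of $H$ (immediate, since conjugation by $x$ is a bijection of $\Omega \times \Omega$ that intertwines the $G$-action with the $H$-action).
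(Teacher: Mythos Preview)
Your proof is correct and follows essentially the same approach as the paper: both reduce to showing that the orbital graph of \(G\) with base-pair \(p\) is mapped by \(x\) to the orbital graph of \(H\) with base-pair \(p^{x}\), which is exactly your displayed computation \(A^{x} = (p^{G})^{x} = (p^{x})^{H}\). The paper states this as a one-line ``routine to verify'' remark and leaves the surjectivity of the resulting map on sets of orbital graphs implicit, whereas you spell out both the conjugation identity and the bijection argument explicitly.
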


\begin{proof}
  It is routine to verify that if \(G^{x} = H\), then for all \(\alpha,\beta \in \Omega\), \(x\) transports the orbital graph of \(G\) with base-pair \((\alpha,\beta)\) to the orbital graph of \(H\) with base-pair \((\alpha^{x},\beta^{x})\).
\end{proof}

Proposition~\ref{prop-normaliser-permutes-orbitals} implies that the function \(\mu\) that maps a subgroup of \(\SOm\) to its set of orbital graphs is \(\SOm\)-invariant.
In Section~\ref{sec-perfect-set-of-graphs}, we described perfect refiners in extended graph backtracking for the stabilisers and transporter sets of arbitrary sets of digraphs,
and so by Theorem~\ref{thm-perfect-stabilisers-and-transporters}, we can use \(\mu\) to obtain refiners for the normalisers and sets of conjugating elements of subgroups.
However, if \(|\Omega| \geq 4\), then \(\mu\) is not injective (consider different 2-transitive subgroups of \(\SOm\)), so these refiners are not necessarily perfect.
Nevertheless, \(\mu\) gives many instances of perfect refiners.
For example, it is clear that the set of 2-closed subgroups of \(\SOm\) is closed under conjugation by \(\SOm\), and that the restriction of \(\mu\) to this set is injective. Therefore those refiners are perfect.

Furthermore, by Proposition~\ref{prop-normaliser-permutes-orbitals}, the refiner described above for the normaliser of a subgroup \(G \leq \SOm\) is perfect if and only if \(N_{\SOm}(G)\) is equal to the stabiliser in \(\SOm\) of the set of orbital graphs of \(G\).
By Lemma~\ref{lem-orbital-setstab-is-2closure-norm},
these are the subgroups whose normaliser in \(\SOm\) coincides with the normaliser of its 2-closure.
Example~\ref{ex-orbital-setstab} shows that this includes more than just the 2-closed subgroups of \(\SOm\).

\begin{lemma}\label{lem-orbital-setstab-is-2closure-norm}
  Let \(G \leq \SOm\).
  The stabiliser in \(\SOm\) of the set of orbital graphs of \(G\) is the normaliser in \(\SOm\) of the 2-closure of \(G\).
\end{lemma}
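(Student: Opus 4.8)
The plan is to prove the set equality $\Stab_{\SOm}(\mu(G)) = N_{\SOm}(G^{(2)})$, where $\mu(G)$ denotes the set of orbital graphs of $G$ and $G^{(2)}$ denotes the 2-closure of $G$, by establishing both inclusions. First I would record two structural facts from the definition of the 2-closure. Since $G^{(2)}$ is by definition the largest subgroup of $\SOm$ with the same orbits on $\Omega \times \Omega$ as $G$, the groups $G$ and $G^{(2)}$ have identical orbital arc sets, so $\mu(G) = \mu(G^{(2)})$. Moreover $G^{(2)}$ is precisely the set of permutations that preserve every orbital arc set of $G$; equivalently, writing $A_1, \dots, A_k$ for the orbits of $G$ on $\Omega \times \Omega$, we have $g \in G^{(2)}$ if and only if $A_i^{g} = A_i$ for all $i$.

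For the inclusion $\Stab_{\SOm}(\mu(G)) \subseteq N_{\SOm}(G^{(2)})$, I would take $x \in \Stab_{\SOm}(\mu(G))$. Since the orbital graphs all share the vertex set $\Omega$, the condition that $x$ fixes $\mu(G)$ setwise says exactly that $x$ permutes the arc sets $A_1, \dots, A_k$ among themselves; the same is then true of $x^{-1}$. The key step is a short index-chase: for $g \in G^{(2)}$ and any $i$, I would compute $A_i^{g^{x}} = \big((A_i^{x^{-1}})^{g}\big)^{x}$, note that $A_i^{x^{-1}}$ is again some orbital arc set $A_j$, which $g$ fixes, and that applying $x$ returns $A_i$; hence $g^{x}$ preserves every $A_i$, so $g^{x} \in G^{(2)}$. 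This yields $(G^{(2)})^{x} \subseteq G^{(2)}$, and running the same argument with $x^{-1}$ gives the reverse containment, so $(G^{(2)})^{x} = G^{(2)}$ and $x \in N_{\SOm}(G^{(2)})$.

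For the inclusion $N_{\SOm}(G^{(2)}) \subseteq \Stab_{\SOm}(\mu(G))$, I would take $x$ with $(G^{(2)})^{x} = G^{(2)}$ and apply Proposition~\ref{prop-normaliser-permutes-orbitals} to the group $G^{(2)}$: since $x$ conjugates $G^{(2)}$ to itself, $x$ transports the set of orbital graphs of $G^{(2)}$ to the set of orbital graphs of $(G^{(2)})^{x} = G^{(2)}$, that is, it stabilises $\mu(G^{(2)})$. As $\mu(G^{(2)}) = \mu(G)$, this gives $x \in \Stab_{\SOm}(\mu(G))$, completing the proof.

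The only step with any real content is the index-chase in the first inclusion, and the subtle point there is that $x$ stabilising $\mu(G)$ is merely a \emph{setwise} condition that may genuinely permute the orbital graphs, yet this still forces each conjugate $g^{x}$ to fix \emph{every} orbital arc set individually. This works precisely because $x$ acts as a bijection on the finite set of orbitals and each element of $G^{(2)}$ fixes all of them simultaneously, so the permutation induced by $x$ is absorbed. The reverse inclusion is essentially immediate once Proposition~\ref{prop-normaliser-permutes-orbitals} is in hand.
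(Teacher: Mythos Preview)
Your proof is correct and follows essentially the same approach as the paper's: both use \(\mu(G)=\mu(G^{(2)})\), invoke Proposition~\ref{prop-normaliser-permutes-orbitals} for the inclusion \(N_{\SOm}(G^{(2)})\subseteq\Stab_{\SOm}(\mu(G))\), and carry out the same index-chase (computing \(\Gamma^{x^{-1}gx}\)) for the reverse inclusion. The only cosmetic difference is that you close the containment \((G^{(2)})^{x}\subseteq G^{(2)}\) by rerunning the argument with \(x^{-1}\), whereas the paper leaves this implicit (finiteness suffices).
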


\begin{proof}
  Let \(K\) denote the 2-closure of \(G\).
  The orbital graphs of \(G\) and \(K\) coincide by definition, and so \(N_{\SOm}(G)\) is contained in the stabiliser by Proposition~\ref{prop-normaliser-permutes-orbitals}.
  Conversely, let \(x \in \SOm\) stabilise the set of orbital graphs of \(G\) (and \(K\)) and let \(g \in K\).
  For all orbital graphs \(\Gamma\) of \(K\), \(\Gamma^{x^{-1}}\) is an orbital graph of \(K\) by assumption, and \(\Gamma^{x^{-1}g} = \Gamma^{x^{-1}}\) since \(g \in K\). Hence \(\Gamma^{x^{-1} g x} = \Gamma\), and \(x^{-1} g x \in K\).
\end{proof}

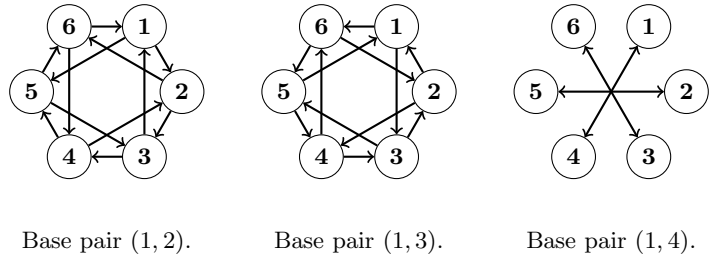
\begin{figure}[!ht]
  \small
  \centering
  \begin{tikzpicture}
    \foreach \x in {1,2,3,4,5,6} {
      \node[circle, draw=black] (\x) at (-\x*60+120:1cm) {\(\mathbf{\x}\)};};
    \foreach \x/\y in {1/2,1/5,2/3,2/6,3/1,3/4,4/2,4/5,5/3,5/6,6/1,6/4} {\arc{\x}{\y}};
    \node at (0, -2) {Base pair \((1,2)\).};
  \end{tikzpicture}
  \qquad
  \begin{tikzpicture}
    \foreach \x in {1,2,3,4,5,6} {
      \node[circle, draw=black] (\x) at (-\x*60+120:1cm) {\(\mathbf{\x}\)};};
    \foreach \x/\y in {1/3,1/6,2/1,2/4,3/2,3/5,4/3,4/6,5/1,5/4,6/2,6/5} {\arc{\x}{\y}};
    \node at (0, -2) {Base pair \((1,3)\).};
  \end{tikzpicture}
  \qquad
  \begin{tikzpicture}
    \foreach \x in {1,2,3,4,5,6} {
      \node[circle, draw=black] (\x) at (-\x*60+120:1cm) {\(\mathbf{\x}\)};};
    \foreach \x/\y in {1/4,2/5,3/6} {\arcSym{\x}{\y}};
    \node at (0, -2) {Base pair \((1,4)\).};
  \end{tikzpicture}
  \caption{\label{fig-orbital-setstab}
  The orbital graphs of the subgroup \(G\) of \(\Sym{\{1,\ldots,6\}}\) from Example~\ref{ex-orbital-setstab}.
  }
\end{figure}

\begin{example}\label{ex-orbital-setstab}
  Let \(\Omega \coloneqq \{1,\ldots,6\}\) and let \(G \coloneqq \< (1\,2\,3)(4\,5\,6), (1\,4)(2\,5) \> \leq \SOm\).
  The orbital graphs of \(G\) are depicted in Figure~\ref{fig-orbital-setstab}.
  The normaliser of \(G\) in \(\SOm\) is the stabiliser in \(\SOm\) of the set of orbital graphs of \(G\), even though \(G\) is not 2-closed. We note that \(G < N_{\SOm}(G) < \SOm\).
\end{example}


\section{Closing remarks}\label{sec-outro}

We have introduced the concept of perfect refiners, which gives a way of comparing the available pruning power in the various backtracking frameworks and which, within a framework, gives a way of comparing refiners for a given set.
This is naturally complemented by the introduction of extended graph backtracking.
We have also discussed the existence of perfect refiners in the different frameworks, and given concrete examples of perfect refiners that are implemented in \textsc{Vole}.

This work suggests several obvious questions and directions for further investigation.

For any given search problem, which backtracking framework is the most appropriate for solving it, and how should this decision be made?  Having decided upon a framework, which refiners should be used?  In which other ways can we compare and understand refiners?

One obvious line of inquiry is the development of further methods for comparing different refiners for the same set, and of measuring the `quality' of a given refiner.
The notion of perfectness is binary, and it fails to capture any nuance in the way that a refiner may fail to be perfect.

It would also be useful to better understand the performance implications of using a given refiner in a backtrack search algorithm.
Roughly speaking, a perfect refiner for a given set is best possible in terms of search size.
On the other hand, like any refiner, a perfect refiner may be impractically expensive to compute with, and this might partially or fully override the search-size advantage.
For refiners in extended graph backtracking, we could begin to understand the interplay between these effects by 
distinguishing refiners according to how many vertices and arcs their extended graphs require.

We have seen in Proposition~\ref{prop-arbitrary-group-graph} and Lemma~\ref{lem-extended-perfect} that every group and coset has a perfect refiner in extended graph backtracking, but that the digraphs in~\cite{Bouwer69,MathOverflow} that underpin these results may be impractical for computation.
Extended graph backtracking would therefore benefit from a better understanding of the theoretical and practical possibilities for representing groups by digraphs of the kind in Proposition~\ref{prop-arbitrary-group-graph}, because then we could work on finding perfect refiners, or decide that that is infeasible.

As mentioned in Section~\ref{sec-perfect-2-closed-normaliser}, extended graph backtracking appears to be well suited to the development of better refiners for use in normaliser and subgroup conjugacy computations. This is already an active area of research, motivated partly by the fact that good algorithms for normaliser search exist (e.g. in \texttt{MAGMA}, see~\cite{Magma}). 
Our work in this direction will be continued in the future.

Finally, we note that there is ongoing work to apply the ideas of the graph backtracking and extended graph backtracking frameworks to canonisation algorithms in finite symmetric groups.


\phantomsection\addcontentsline{toc}{section}{References}
\bibliography{perfect-refiners}{}

\begin{thebibliography}{JPWW21}

\bibitem[BCP97]{Magma}
Wieb Bosma, John Cannon, and Catherine Playoust.
\newblock The {M}agma algebra system. {I}. {T}he user language.
\newblock {\em J. Symbolic Comput.}, 24(3-4):235--265, 1997.
\newblock Computational algebra and number theory (London, 1993).

\bibitem[Bou69]{Bouwer69}
I.~Z. Bouwer.
\newblock Section graphs for finite permutation groups.
\newblock {\em J. Combinatorial Theory}, 6:378--386, 1969.

\bibitem[Cha21]{MunSeeThesis}
Mun~See Chang.
\newblock {\em Computing normalisers of highly intransitive groups}.
\newblock PhD thesis, University of St Andrews, 2021.

\bibitem[CJW21]{vole}
Mun~See Chang, Christopher Jefferson, and Wilf~A. Wilson.
\newblock {\em Vole -- GAP package, Version 0.5.2}, December 2021.

\bibitem[DM96]{DixonMortimer}
John~D. Dixon and Brian Mortimer.
\newblock {\em Permutation groups}, volume 163 of {\em Graduate Texts in
  Mathematics}.
\newblock Springer-Verlag, New York, 1996.

\bibitem[JPW19]{newrefiners}
Christopher Jefferson, Markus Pfeiffer, and Rebecca Waldecker.
\newblock New refiners for permutation group search.
\newblock {\em J. Symbolic Comput.}, 92:70--92, 2019.

\bibitem[JPWW19]{GB_extended}
Christopher Jefferson, Markus Pfeiffer, Rebecca Waldecker, and Wilf~A. Wilson.
\newblock Permutation group algorithms based on directed graphs (extended
  version).
\newblock {\em Preprint}, November 2019.
\newblock \url{https://arxiv.org/abs/1911.04783}.

\bibitem[JPWW21]{GB_published}
Christopher Jefferson, Markus Pfeiffer, Wilf~A. Wilson, and Rebecca Waldecker.
\newblock Permutation group algorithms based on directed graphs.
\newblock {\em Journal of Algebra}, 585:723--758, 11 2021.

\bibitem[Kea]{MathOverflow}
Keith Kearnes.
\newblock Can every permutation group be realized as the automorphism group of
  a graph (acting on a subset of the vertices)?
\newblock MathOverflow.
\newblock \url{https://mathoverflow.net/q/215472} (version: 2015-08-23).

\bibitem[Leo91]{Leon1991}
Jeffrey~S. Leon.
\newblock Permutation group algorithms based on partitions. {I}. {T}heory and
  algorithms.
\newblock {\em J. Symbolic Comput.}, 12(4-5):533--583, 1991.
\newblock Computational group theory, Part 2.

\bibitem[Leo97]{Leon1997}
Jeffrey~S. Leon.
\newblock Partitions, refinements, and permutation group computation.
\newblock In {\em Groups and computation, {II} ({N}ew {B}runswick, {NJ},
  1995)}, volume~28 of {\em DIMACS Ser. Discrete Math. Theoret. Comput. Sci.},
  pages 123--158. Amer. Math. Soc., Providence, RI, 1997.

\bibitem[Sim71]{Sims1971}
Charles~C. Sims.
\newblock Computation with permutation groups.
\newblock In {\em Proceedings of the Second ACM Symposium on Symbolic and
  Algebraic Manipulation}, SYMSAC '71, pages 23--28, New York, NY, USA, 1971.
  ACM.

\bibitem[The97]{Theissen97}
Heiko Thei{\ss}en.
\newblock {\em Eine Methode zur Normalisatorberechnung in Permutationsgruppen
  mit Anwendungen in der Konstruktion primitiver Gruppen}.
\newblock Verlag der Augustinus-Buchh., 1997.

\end{thebibliography}
\bibliographystyle{alpha}

\end{document}